\newtheorem{ut}{Theorem}
\newtheorem{ul}[ut]{Lemma}
\newtheorem{uc}[ut]{Corollary}
\newtheorem*{uq}{Question}
\newtheorem{ue}{Example}
\def\ps@pprintTitle{%
 \let\@oddhead\@empty
 \let\@evenhead\@empty
 \def\@oddfoot{\centerline{\thepage}}%
 \let\@evenfoot\@oddfoot}
\theoremstyle{remark}
\newtheorem*{ur}{Remark}
\newtheorem*{urs}{Remarks}
\g@addto@macro\th@definition{\thm@headpunct{\textnormal{.}}}
\theoremstyle{definition}
\newtheorem*{ucl1}{\textnormal{\textit{Claim 1}}}
\newtheorem*{ucl2}{\textnormal{\textit{Claim 2}}}
\newtheorem*{ucl3}{\textnormal{\textit{Claim 3}}}
\newtheorem*{ucl4}{\textnormal{\textit{Claim 4}}}
\newtheorem*{ucl5}{\textnormal{\textit{Claim 5}}}
\newtheorem*{uca1}{\textnormal{\textit{Case 1}}}
\newtheorem*{uca2}{\textnormal{\textit{Case 2}}}
\newenvironment{uconj}[1]
  {\innercustomthm}
  {\endinnercustomthm}
\DeclareMathOperator{\cl}{cl}
\def\bysame{\leavevmode\hbox to3em{\hrulefill}\thinspace}
\numberwithin{equation}{section}
\date{\today}
\begin{document}

\begin{frontmatter}

\title{On indecomposability  of $\beta X$}

\author{David Sumner Lipham}
\ead{dsl0003@auburn.edu}
\address{Department of Mathematics, Auburn University, Auburn, AL 36830}

\begin{abstract}The following is an open problem in topology: Determine whether the Stone-\v{C}ech compactification of a widely-connected space is necessarily an indecomposable continuum. Herein we describe  properties of $X$ that are necessary and sufficient in order for $\beta X$ to be  indecomposable. We show that  indecomposability and irreducibility are equivalent properties in compactifications of  widely-connected separable metric spaces, leading to some equivalent formulations of the  open problem.  We also construct a widely-connected subset of Euclidean $3$-space which is contained in a composant of each of its compactifications.  The example answers a question of Jerzy Mioduszewski.
\end{abstract}

\begin{keyword}
widely-connected, quasi-component,  indecomposable, compactification, Stone-\v{C}ech
\MSC[2010] 54D05, 54D35, 54F15, 54G20
\end{keyword}

\end{frontmatter}

\section{Introduction}

This paper addresses a collection of problems relating to widely-connected sets and indecomposability of the Stone-\v{C}ech compactification.\footnote{Indecomposability of the growth $\beta X\setminus X$ (otherwise known as the Stone-\v{C}ech remainder) was characterized  in \cite{dic}, closely following the discovery that $\beta [0,\infty)\setminus [0,\infty)$ is an indecomposable continuum -- see \cite{belll} and \cite{KH} \S9.12. Here we are interested in $\beta X$ as a whole.}  

A connected topological space $W$ is \textit{widely-connected} if every non-degenerate connected subset of $W$ is dense in $W$.  A connected compact Hausdorff  space  (a \textit{continuum}) is \textit{indecomposable} if it cannot be written as the union of any two of its proper subcontinua. By Theorem 2 in \cite{kur} \S 48 V, the latter term can be consistently defined in the absence of compactness and/or connectedness. To wit, a topological space $X$ is \textit{indecomposable} if every connected subset of $X$ is either dense or nowhere dense in $X$ (cf. \cite{swi2,swi3,rud3} for connected spaces).  Now every widely-connected space is indecomposable. 
  
The three-part question below was asked by  Jerzy Mioduszewski at the 2004 Spring Topology and Dynamics Conference. 

\begin{uq}[Mioduszewski; 23 in \cite{rep}]\label{mio}Let $W$ be a widely-connected space. 
\begin{enumerate}[label=\textnormal{(\Alph*)}]	
\item  Is $\beta W$ necessarily an indecomposable continuum? 
\item  If $W$ is  metrizable and separable, does $W$ necessarily have a metric compactification
	which is an indecomposable continuum?
\item If $W$ is metrizable and separable, does $W$ necessarily have a metric compactification
$\gamma W$ such that for every composant $P$ of $\gamma W$, $W\cap P$ is \textnormal{(i)} hereditarily
disconnected? \textnormal{(ii)}  finite? \textnormal{(iii)} a singleton?
\end{enumerate}\end{uq} 

\noindent Part (A) later became Problem 521 in  \textit{Open Problems in Topology II}, due to David Bellamy \cite{bel}.  In Problem 520 from the same book, Bellamy conjectured a positive answer to  (B).   

Note that, as stated, (A) is more general than (B) and (C). Question (A) is about arbitrary Tychonoff spaces  and Questions (B) and (C) are about separable metrizable spaces. We let (A$'$) be the version of Question (A) that assumes $W$ is separable  and metrizable.

 \subsection{Notation and terminology}

 In Mioduszewski's question: 
 \begin{itemize}
\item $\beta W$ denotes the Stone-\v{C}ech compactification of $W$;
\item $\gamma W$  is a \textit{compactification} of $W$ if  $\gamma W$ is a compact Hausdorff space in which $W$ is densely embedded;
\item $P$ is a \textit{composant} of  $\gamma W$ if $P$ is the union of all proper subcontinua of $\gamma W$ that contain a given point; 
\item $W\cap P$ is \textit{hereditarily disconnected} means that  $\abs{C}\leq 1$ for every connected $C\subseteq W\cap P$. 
\end{itemize}
These definitions generalize in the obvious ways; see \cite{eng}.  See \cite{KH} for constructions and unique properties of the Stone-\v{C}ech compactification. Basic information about  composants is given in  \cite{kur} \S48 VI.

A subset $Q$  of a topological space $X$ is called a \textit{quasi-component} of $X$ if there exists $q\in X$ such that $$Q=\bigcap \{A\subseteq X:A\text{ is clopen and }q\in A\}.$$ If $|Q|=1$ for every quasi-component $Q$ of $X$, then $X$ is \textit{totally disconnected}.  

If $p$ and $q$ are two points in a connected space $X$,  then $X$ is \textit{reducible between $p$ and $q$} if there is a closed connected $C\subsetneq X$ with $\{p,q\}\subseteq C$. Otherwise, $X$ is \textit{irreducible between $p$ and $q$}.  Observe that $W$ is widely-connected if and only if $W$ is connected and irreducible between every two of its points.   

A continuum with only one composant is said to be  \textit{reducible}.  A continuum is \textit{irreducible} if it has more than one composant, that is, if there are two points between which the continuum is irreducible. 

\subsection{Summary of results and main example}

Our results are  divided across two sections. Results in Section \ref{people} apply to general Tychonoff spaces, while Section \ref{lop} is reserved for the separable metrizable setting.  

For Tychonoff $X$, we  characterize indecomposability of $\beta X$ via an elementary property of $X$ (Theorem \ref{4} \& Corollary \ref{5}).  We also prove $\beta X$ is indecomposable [resp. irreducible] if $X$ has an indecomposable [resp. irreducible] compactification (Theorems \ref{6}(i) \& \ref{6}(ii)). And irreducible compactifications of indecomposable spaces are  indecomposable (Theorem \ref{6}(iii)).   

Conversely, indecomposable connected compactifications of \textit{separable metrizable}  spaces are  irreducible  (Theorem \ref{8}).   And if $X$  is connected,  separable, and metrizable, and $\beta X$ is indecomposable, then $X$ densely embeds into an  indecomposable subcontinuum of the Hilbert cube (Theorems \ref{4} \&  \ref{9}).

The preceding theorems imply (A$'$) and (B) are equivalent to:

\begin{uconj}{\textnormal{(D)}}If $W$ is a connected separable metric space that is irreducible between every two of its points, then does $W$ necessarily have an irreducible compactification?\footnote{There does exist an irreducible connected plane set every compactification of which is reducible -- see the end of our short follow-up paper \cite{lip2}.  The example is not indecomposable, although it contains an indecomposable connected set. }\end{uconj}

\begin{figure}[H]
 \begin{center}
\begin{tikzpicture}[baseline= (A).base]
\node[scale=1] (A) at (0,0){
\begin{tikzcd}
    \text{(A$'$)}  \rar[Rightarrow,"\hspace{-0.5mm}\text{Thms. \ref{4} \& \ref{9}}\hspace{0.5mm}"{yshift=2pt}]  \arrow[ddr,Leftarrow,"\text{\hspace{-.7cm}\rotatebox{-34.3}{Thm. \ref{6}}}"{yshift=-6.5pt,xshift=10.5pt}]
    & [3em] \text{(B)}  \arrow[dd,Rightarrow,"\text{\rotatebox{-90}{Thm. \ref{8}}}"{yshift=2pt,xshift=2pt}] 
    & [2em]  \text{(C)}
    \arrow[ddl,Rightarrow]& \hspace{-3em}\footnotesize\text{(i, ii, or iii)}
    \\
    &   \\&\text{(D)}
    \end{tikzcd}
};
\end{tikzpicture}
\end{center}
\caption{Summary (Corollary \ref{10}); \ `(X)$\implies$(Y)' means `a positive answer to (X) implies a positive answer to (Y)'}
\end{figure}
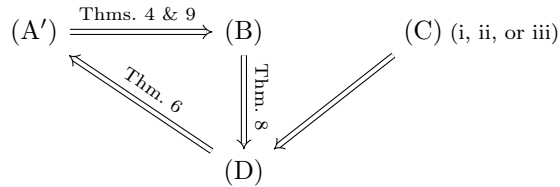

Irreducible compactifications of  widely-connected spaces are necessarily irreducible between points of their remainders (Corollary \ref{7}). On the other hand, there is a widely-connected $\widetilde  W\subseteq\mathbb R ^3$ about which every compactification is reducible.  That is, every compactification of $\widetilde W$ is reducible between every two points of $ \widetilde  W$. The example is presented near the end of Section \ref{peo}. It provides a negative answer to all parts of Question (C), improving one of our previous results from Section 4 of  \cite{lip}.  

We conclude Section \ref{peo} by noting that $\widetilde  W$ is not a counterexample to Bellamy's conjecture (a positive answer to Question (B)) because it densely embeds into an indecomposable subcontinuum of $[0,1]^3$.

\setstretch{1.1}
\section{Results for Tychonoff spaces}\label{people}

Let us begin by defining an  elementary property which we call strong indecomposability.  A topological space $X$ is \textit{strongly indecomposable} means that for every two non-empty disjoint open sets $U$ and $V$ there are two closed sets $A$ and $B$ such that: 
\begin{align}
&X=A\cup B; \\
&A\cap U\neq\varnothing; \\
&B\cap U\neq\varnothing;\text{ and} \\
&A\cap B\subseteq V.
\end{align}

Strongly indecomposable spaces are indecomposable.  For  if $C\subseteq X$ is neither dense nor nowhere dense in $X$, then applying strong indecomposability to the open sets $(\overline{C})\mkern-1mu{\vrule width0pt height 2ex}^\mathrm{o}$ and $X\setminus \overline C$ will show  $C$ is not connected.   

Every perfect totally disconnected space is strongly indecomposable, but the corresponding statement is false  if  \textit{totally disconnected} is  weakened to \textit{hereditarily disconnected} (see Example \ref{e2} in Section \ref{peo}). As a result,   indecomposability and strong indecomposability are not equivalent.  They are, however, equivalent for compact Hausdorff spaces. Moreover, Theorem \ref{4}  says  $\beta X$ is indecomposable if and only if $X$ is strongly indecomposable.  We assume, of course, that $X$ is Tychonoff. 

Before proving Theorem \ref{4} we need three rather basic lemmas.  The first is standard and will be used several times throughout the paper.

\begin{ul}[Theorem 6.1.23 in \cite{eng}]\label{1}In a compact Hausdorff space $X$ the connected component of a point $x\in X$ coincides with the quasi-component of $x$.\end{ul}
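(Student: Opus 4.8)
The plan is to establish the two inclusions separately, since one direction is trivial and all the real content lives in the other. Write $C$ for the connected component of $x$ and $Q$ for its quasi-component. The inclusion $C\subseteq Q$ holds in any topological space: if $A$ is clopen with $x\in A$, then $A\cap C$ is a nonempty clopen subset of the connected set $C$, so $A\cap C=C$ and hence $C\subseteq A$; intersecting over all such $A$ gives $C\subseteq Q$. The whole burden of the lemma is therefore the reverse inclusion, and I would obtain it by showing that $Q$ is itself connected. Once that is known, $Q$ is a connected set containing $x$, so maximality of the component yields $Q\subseteq C$, and the two inclusions combine to $C=Q$.

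To prove $Q$ connected I would argue by contradiction, and this is where compactness and the Hausdorff axiom enter. As an intersection of clopen sets, $Q$ is closed, hence compact. A disconnection of $Q$ would give $Q=P\cup R$ with $P,R$ disjoint, nonempty, and closed in $X$; arrange $x\in P$. Because a compact Hausdorff space is normal, I can choose disjoint open sets $U\supseteq P$ and $V\supseteq R$. The crucial step is then to trap $Q$ inside a single clopen set contained in $U\cup V$: the set $X\setminus(U\cup V)$ is closed, hence compact, and is disjoint from $Q=\bigcap\{A:A\text{ clopen},\,x\in A\}$, so for each of its points there is a clopen $A\ni x$ omitting that point. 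The complements of these clopen sets form an open cover of $X\setminus(U\cup V)$, and a finite subcover produces, upon intersecting the corresponding clopen sets, a single clopen $A$ with $x\in A\subseteq U\cup V$.

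It then remains to squeeze $Q$ into $U$ alone. Since $U\cap V=\varnothing$ and $A\subseteq U\cup V$, the set $A\cap U$ is open and its complement in $A$ is the open set $A\cap V$, so $A\cap U$ is closed in $A$; as $A$ is closed in $X$, the set $A\cap U$ is clopen in $X$, and it contains $x$ because $x\in P\subseteq U$. Consequently $Q\subseteq A\cap U\subseteq U$, whence $R\subseteq Q\subseteq U$. But $R\subseteq V$ and $U\cap V=\varnothing$ force $R=\varnothing$, contradicting the disconnection. Therefore $Q$ is connected, and the proof is complete.

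I expect the only genuinely nontrivial point to be the compactness argument of the second paragraph, namely the extraction of a single clopen set $A$ with $Q\subseteq A\subseteq U\cup V$; everything before and after it is routine bookkeeping with clopen sets, and the Hausdorff/normality hypothesis is used only to produce the separating pair $U,V$.
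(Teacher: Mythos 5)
Your proof is correct and complete: the inclusion $C\subseteq Q$ is the easy direction, and your compactness argument — extracting a single clopen $A$ with $Q\subseteq A\subseteq U\cup V$ and then observing that $A\cap U$ is clopen — is exactly the standard route to connectedness of the quasi-component. The paper offers no proof of its own (it cites Theorem 6.1.23 of Engelking), and your argument is essentially the one given in that reference, so there is nothing to reconcile.
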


\begin{ul}Indecomposable compact Hausdorff spaces are strongly indecomposable.\label{2}\end{ul}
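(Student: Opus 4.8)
The plan is to reduce strong indecomposability to a statement about quasi-components of the closed (hence compact) subspace $K:=X\setminus V$, where $U,V$ are the given disjoint non-empty open sets. The key observation is that the four required conditions amount to splitting $K$ into two relatively clopen pieces, each meeting $U$, and then ``thickening'' these pieces back into closed subsets $A,B$ of $X$ whose overlap is swallowed by $V$. Since $U\cap V=\varnothing$ we have $U\subseteq K$, so the whole problem lives over $K$.

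First I would establish the crucial claim: there exist two points $p,q\in U$ lying in distinct quasi-components of $K$. Suppose not; then $U$ is contained in a single quasi-component $Q$ of $K$. Because $K$ is compact Hausdorff, Lemma \ref{1} identifies $Q$ with a connected component of $K$, so $Q$ is connected and closed in $K$, hence closed in $X$. But $Q\supseteq U$ forces $Q$ to have non-empty interior, so $Q$ is not nowhere dense in $X$; and $Q\subseteq K$ is disjoint from the non-empty open set $V$, so the closed set $Q$ is not dense in $X$ either. Thus $Q$ would be a connected subset of $X$ that is neither dense nor nowhere dense, contradicting the indecomposability of $X$. This is the heart of the argument and the step I expect to be the most delicate, since one must invoke Lemma \ref{1} precisely in order to promote the quasi-component to a genuine connected set before indecomposability can be applied.

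Given $p,q\in U$ in distinct quasi-components of $K$, I would pick a set $D$ that is clopen in $K$ with $p\in D$ and $q\notin D$ (such $D$ exists because the quasi-component of $p$ is the intersection of all clopen sets containing $p$, and $q$ lies outside it). Setting $F=D$ and $G=K\setminus D$ gives two disjoint sets, each clopen in $K$ and therefore closed in $X$, with $F\cup G=K$, $p\in F\cap U$, and $q\in G\cap U$.

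Finally I would thicken $F,G$ into the required $A,B$. Using normality of the compact Hausdorff space $X$, choose disjoint open sets $O_F\supseteq F$ and $O_G\supseteq G$, and put $A=X\setminus O_G$ and $B=X\setminus O_F$. A short verification then gives $A\cup B=X\setminus(O_F\cap O_G)=X$; next $A\cap B=X\setminus(O_F\cup O_G)\subseteq X\setminus K=V$, since $O_F\cup O_G\supseteq F\cup G=K$; and $F\subseteq A$, $G\subseteq B$, so $A\cap U\ni p$ and $B\cap U\ni q$ are both non-empty. This realizes all four defining conditions ($X=A\cup B$, $A\cap U\neq\varnothing$, $B\cap U\neq\varnothing$, and $A\cap B\subseteq V$). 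The remaining work is purely the checking of these set-theoretic identities and introduces no further ideas.
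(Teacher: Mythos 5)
Your proposal is correct and takes essentially the same approach as the paper: both pass to the compact set $X\setminus V$, use indecomposability together with Lemma~\ref{1} to split it into two disjoint relatively clopen pieces each meeting $U$, and then thicken those pieces by normality, taking $A$ and $B$ to be the complements of the resulting disjoint open sets. The only cosmetic difference is that you run the component/quasi-component conversion inside a proof by contradiction, whereas the paper invokes it directly.
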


\begin{proof}Let $X$ be a compact Hausdorff space, and let $U$ and $V$ be non-empty disjoint open subsets of $X$.   Assuming $X$ is indecomposable, $U$ is not contained in a component of $X\setminus V$. By Lemma \ref{1}, $X\setminus V$ can be written as $A'\cup B'$ with  $A'$ and $B'$ closed and disjoint, and such that $A' \cap U$ and $B'\cap U$ are non-empty. By normality there are disjoint open sets $G$ and $H$ such that $A'\subseteq G$ and $B'\subseteq H$. Then conditions 2.1 through 2.4 are satisfied with $A:=X\setminus H$ and $B:=X\setminus G$.\end{proof}

\begin{ul}\label{3}Strong indecomposability is hereditary for dense subsets.\end{ul}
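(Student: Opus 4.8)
The plan is to show that if $X$ is strongly indecomposable and $D \subseteq X$ is dense, then $D$ is strongly indecomposable. So I start with two non-empty disjoint open subsets $U_D$ and $V_D$ of the subspace $D$, which I write as $U_D = U \cap D$ and $V_D = V \cap D$ for open sets $U, V$ in $X$. The goal is to produce closed subsets $A_D, B_D$ of $D$ satisfying the four defining conditions relative to $U_D$ and $V_D$. The natural strategy is to apply strong indecomposability of $X$ to a suitable pair of open sets in $X$ and then intersect the resulting closed sets with $D$.

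The first technical point is that $U$ and $V$ need not be disjoint in $X$ even though $U_D$ and $V_D$ are disjoint in $D$. To remedy this, I would shrink $U$: since $U_D \cap V_D = \varnothing$ and $D$ is dense, replacing $U$ by $U \setminus \overline{V}$ (an open subset of $X$) does no harm, because $U \cap D$ and $(U \setminus \overline V) \cap D$ have the same closure in $D$ — indeed $U \cap D \subseteq \overline{V}$ would force $U_D \subseteq \overline{V_D}$, contradicting that $U_D$ is a non-empty open set disjoint from $V_D$. So I may assume $U$ and $V$ are disjoint open subsets of $X$ with $U \cap D = U_D$ and $V \cap D = V_D$, both non-empty. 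Strong indecomposability of $X$ then yields closed sets $A, B \subseteq X$ with $X = A \cup B$, with $A \cap U \ne \varnothing$, $B \cap U \ne \varnothing$, and $A \cap B \subseteq V$. I then set $A_D := A \cap D$ and $B_D := B \cap D$, which are closed in $D$, cover $D$, and satisfy $A_D \cap B_D \subseteq V \cap D = V_D$.

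The step I expect to be the main obstacle is verifying the two non-emptiness conditions $A_D \cap U_D \ne \varnothing$ and $B_D \cap U_D \ne \varnothing$ — that is, promoting $A \cap U \ne \varnothing$ in $X$ to $A \cap U \cap D \ne \varnothing$. Density of $D$ alone does not immediately give this, since $A \cap U$ could be a nowhere-dense sliver that misses $D$. The fix is to note that $A \cap U$ is the intersection of the closed set $A$ with the open set $U$, and what I really need is a non-empty open set contained in $A$ meeting $U$. Here I would use that $A \cap U \ne \varnothing$ together with $X = A \cup B$: the open set $U$ meets $A$, and since $A$ is closed, if $U \cap A$ had empty interior relative to $U$ then $U \subseteq \overline{U \setminus A} = \overline{U \cap B}$, which is fine but doesn't directly help. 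The cleaner route is to observe that $U \setminus B \subseteq A$ is open in $X$; if $U \setminus B \ne \varnothing$ it meets the dense set $D$ and lands in $A_D \cap U_D$, and symmetrically $U \setminus A \subseteq B$ handles the other condition.

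The remaining worry is whether $U \setminus B$ and $U \setminus A$ are non-empty. This is where I would need to strengthen the choice of $A, B$: strong indecomposability as stated guarantees $A \cap U \ne \varnothing$ but not $(U \setminus B) \ne \varnothing$. To close this gap I would return to the proof of Lemma \ref{2} or re-apply strong indecomposability to the pair $(U, V \cup (A \cap B))$ — or more simply, re-derive the separation so that $A$ and $B$ restricted to the open set $U$ are genuinely separated there. Concretely, since $A \cap B \subseteq V$ and $U \cap V = \varnothing$, on the open set $U$ we have $A \cap B \cap U = \varnothing$, so $U \cap A$ and $U \cap B$ are disjoint relatively closed subsets of $U$ whose union is $U$; as both are non-empty, each is relatively open in $U$, hence open in $X$ and non-empty, meeting the dense set $D$. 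This gives $A_D \cap U_D \ne \varnothing$ and $B_D \cap U_D \ne \varnothing$, completing all four conditions and proving $D$ is strongly indecomposable. I expect the disjointness-on-$U$ observation $A \cap B \cap U = \varnothing$ to be the crux that makes the density argument go through cleanly.
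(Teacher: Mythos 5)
Your proof is correct and takes essentially the same route as the paper's: apply strong indecomposability in the ambient space to disjoint open extensions of the given sets, intersect the resulting closed cover with the dense subspace, and use the crux observation that $A\cap U$ and $B\cap U$ are \emph{open} (since $A\cap B\subseteq V$ and $V$ misses $U$) and therefore meet the dense set. The only cosmetic difference is your preliminary shrinking of $U$ to $U\setminus\overline{V}$; the paper avoids this by noting that density alone already forces the two open extensions to be disjoint (a non-empty intersection would be a non-empty open set missing the dense subspace).
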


\begin{proof}Suppose $Y$ is strongly indecomposable and $X\subseteq Y=\overline X$. We prove $X$ is strongly indecomposable.  To that end, let $U$ and $V$ be non-empty disjoint open subsets of $X$. We find two $X$-closed sets $A$ and $B$  such that  conditions 2.1 through 2.4 are met. Well, there are two $Y$-open sets $U'$ and $V'$ such that  $U=U'\cap X$ and $V=V'\cap X$. Since $\overline X=Y$, we have $U'\cap V'=\varnothing$. By the assumption that $Y$ is strongly indecomposable, in $Y$ there are two closed sets $A'$ and $B'$ such that 
$$Y=A'\cup B'\; \text{, }\;A'\cap U'\neq\varnothing\;\text{, }\;B'\cap U'\neq\varnothing\;\text{,\; and }A'\cap B'\subseteq V'.$$ 
Apparently, $A:=A'\cap X$ and $B:=B'\cap X$ are  closed in $X$  and 2.1 and 2.4 hold.  Since $A'\cap U'=U'\setminus B'$ and $ B'\cap U'=U'\setminus A'$  are non-empty open subsets of $Y$,  by $\overline X=Y$ we have 2.2 and 2.3.\end{proof}

\begin{ut}\label{4}$\beta X$ is indecomposable if and only if $X$ is strongly indecomposable.\footnote{This intentionally resembles ``$\beta X$ is zero-dimensional if and only if $X$ is strongly zero-dimensional''  -- Theorem 6.2.12 in \cite{eng}.}\end{ut}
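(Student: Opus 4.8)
The statement is an ``iff,'' and the two directions are very different in character. The forward direction ($\beta X$ indecomposable $\Rightarrow X$ strongly indecomposable) should fall out immediately from the two lemmas just proved, while the reverse direction is where the real content lies and where the special features of the Stone--\v{C}ech compactification must be used.

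\textbf{The easy direction.} Assume $\beta X$ is indecomposable. Since $\beta X$ is compact Hausdorff, Lemma \ref{2} gives that $\beta X$ is strongly indecomposable. Because $X$ is densely embedded in $\beta X$, Lemma \ref{3} then transfers strong indecomposability down to $X$. So this direction is just ``Lemma \ref{2} followed by Lemma \ref{3}'' and requires no further work.

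\textbf{The hard direction.} Assume $X$ is strongly indecomposable; I want $\beta X$ indecomposable. Since $\beta X$ is compact Hausdorff and strong indecomposability implies indecomposability (as observed right after the definition), it suffices to prove that $\beta X$ is \emph{strongly} indecomposable. So fix non-empty disjoint open $\mathcal U,\mathcal V\subseteq\beta X$. Using normality of $\beta X$ I would first shrink $\mathcal V$ to a \emph{cozero} set $\mathcal V_0$ with $\overline{\mathcal V_0}\subseteq\mathcal V$, and set $U:=\mathcal U\cap X$, $V_0:=\mathcal V_0\cap X$, which are non-empty (by density of $X$) disjoint open subsets of $X$, with $V_0$ cozero. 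Applying strong indecomposability of $X$ to $U,V_0$ yields closed $A,B\subseteq X$ with $X=A\cup B$, with $A\cap U\neq\varnothing$ and $B\cap U\neq\varnothing$, and with $A\cap B\subseteq V_0$. The natural candidates in $\beta X$ are $\mathcal A:=\cl_{\beta X}A$ and $\mathcal B:=\cl_{\beta X}B$. Then $\mathcal A\cup\mathcal B=\cl_{\beta X}(A\cup B)=\beta X$ (condition 2.1), and $\mathcal A\cap\mathcal U\supseteq A\cap U\neq\varnothing$, likewise for $\mathcal B$ (conditions 2.2, 2.3). The only thing to check is condition 2.4, namely $\mathcal A\cap\mathcal B\subseteq\mathcal V$.

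\textbf{Where the difficulty sits.} Since $A,B$ are closed in $X$, one has $\mathcal A\cap X=A$ and $\mathcal B\cap X=B$, so the trace of $\mathcal A\cap\mathcal B$ on $X$ is exactly $A\cap B\subseteq V_0\subseteq\mathcal V$; the entire problem lives in the remainder $\beta X\setminus X$. A short neighborhood argument shows that if $z\in\mathcal A\cap\mathcal B$ lies outside $\cl_{\beta X}V_0=\overline{\mathcal V_0}$, then every neighborhood of $z$ meets both $A\setminus V_0$ and $B\setminus V_0$, so
\[
\mathcal A\cap\mathcal B\ \subseteq\ \overline{\mathcal V_0}\ \cup\ \bigl(\cl_{\beta X}(A\setminus V_0)\cap\cl_{\beta X}(B\setminus V_0)\bigr).
\]
Now $A\setminus V_0$ and $B\setminus V_0$ are \emph{disjoint} closed subsets of $X$ (their intersection is $(A\cap B)\setminus V_0=\varnothing$) partitioning the zero-set $X\setminus V_0$. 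Thus condition 2.4 reduces to showing that these two sets have \emph{disjoint closures} in $\beta X$. This is precisely the step at which the defining property of $\beta X$ is indispensable: by the standard fact (Gillman--Jerison) that $\cl_{\beta X}E\cap\cl_{\beta X}F=\varnothing$ if and only if $E,F$ are \emph{completely separated} in $X$, disjoint closures amount to complete separation of $A\setminus V_0$ and $B\setminus V_0$. \emph{This is the main obstacle:} disjoint closed sets in a Tychonoff space need not be completely separated, and nothing in the literal definition of strong indecomposability forces it. I expect the proof to resolve this exactly as in the zero-dimensional analogue cited in the footnote (Engelking 6.2.12): exploit that $V_0$ is cozero (so $X\setminus V_0$ is a zero-set), together with the zero-set intersection law $\cl_{\beta X}(Z_1\cap Z_2)=\cl_{\beta X}Z_1\cap\cl_{\beta X}Z_2$, to manufacture a continuous $[0,1]$-valued function on $X$ separating the two pieces --- i.e.\ to upgrade the merely-closed sets coming out of strong indecomposability to completely separated ones. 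Once that separation is in hand, $\cl_{\beta X}(A\setminus V_0)\cap\cl_{\beta X}(B\setminus V_0)=\varnothing$ and hence $\mathcal A\cap\mathcal B\subseteq\overline{\mathcal V_0}\subseteq\mathcal V$, completing condition 2.4 and the proof. It is worth stressing that this is also the step that \emph{must} fail for general compactifications, consistent with the fact (emphasized throughout the paper) that arbitrary compactifications of strongly indecomposable spaces can be decomposable.
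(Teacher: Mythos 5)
Your easy direction is exactly the paper's (Lemma \ref{2} followed by Lemma \ref{3}), and your reduction of the hard direction is sound as far as it goes: with $\mathcal A:=\cl_{\beta X}A$ and $\mathcal B:=\cl_{\beta X}B$, conditions 2.1--2.3 hold, and condition 2.4 does reduce, as you say, to showing that $A\setminus V_0$ and $B\setminus V_0$ have disjoint closures in $\beta X$, i.e.\ are completely separated in $X$. But that is precisely where your proof stops. You flag this as the main obstacle and then only express the expectation that it can be settled ``as in Engelking 6.2.12'' by exploiting that $X\setminus V_0$ is a zero-set together with the zero-set intersection law. That is not a proof, and the technique you point to cannot close the gap by itself: $A\setminus V_0$ and $B\setminus V_0$ are not zero-sets, only disjoint relatively clopen pieces of the zero-set $X\setminus V_0$, and such pieces need \emph{not} be completely separated in a Tychonoff space. (In the Niemytzki plane the $x$-axis is a zero-set whose subspace topology is discrete; its rational and irrational parts are relatively clopen in it but cannot even be separated by disjoint open sets --- this is the standard proof of non-normality.) So no purely formal manipulation of zero-sets can deliver the complete separation you need; some additional structure of your situation must be used, and your proposal never identifies or uses it.

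The structure that saves the argument --- and this is the actual content of the paper's proof --- is that $A\cup B=X$ and $A\cap B\subseteq V_0$ force $A\setminus V_0\subseteq X\setminus B$ and $B\setminus V_0\subseteq X\setminus A$, where $X\setminus B$ and $X\setminus A$ are \emph{disjoint open} subsets of $X$ whose union is $X\setminus(A\cap B)$, the trace on $X$ of the $\beta X$-open set $W:=\beta X\setminus\cl_{\beta X}(A\cap B)$. For disjoint $X$-open sets $U,V$ with $U\cup V=W\cap X$, the paper proves $\cl_{\beta X}U\cap\cl_{\beta X}V\cap W=\varnothing$ by a genuinely Stone--\v{C}ech argument: given $p$ in this intersection, take a Urysohn function $F:\beta X\to[0,1]$ with $F(p)=0$ and $F\equiv1$ off $W$, define $f:=1$ on $U$ and $f:=F$ on $X\setminus U$ (continuity uses exactly that $U$ is relatively clopen in $W\cap X$ and that $F\equiv1$ on $X\setminus W$), and derive the contradiction $\beta f(p)=0$ from $p\in\cl_{\beta X}V$ versus $\beta f(p)=1$ from $p\in\cl_{\beta X}U$. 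Applying this with $U=X\setminus B$, $V=X\setminus A$ yields $\cl_{\beta X}(A\setminus V_0)\cap\cl_{\beta X}(B\setminus V_0)\subseteq\cl_{\beta X}(A\cap B)\subseteq\overline{\mathcal V_0}\subseteq\mathcal V$, which completes your condition 2.4. (The paper frames the hard direction differently --- it shows directly that every proper closed $K\subseteq\beta X$ with non-empty interior is disconnected --- but the Urysohn-plus-extension trick just described is its core, and it is exactly the piece missing from your proposal.)
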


\begin{proof}If $\beta X$ is indecomposable then by Lemmas \ref{2} and \ref{3}, $X$ is strongly indecomposable. Now suppose $X$ is strongly indecomposable.  Let $K$ be a proper closed subset of $\beta X$ with non-empty interior. We show $K$ is not connected. 

By regularity of $\beta X$ there is a non-empty $\beta X$-open set $T$ such that $K\cap \cl_{\beta X} T=\varnothing$.  Applying  strong indecomposability to the $X$-open sets $\text{int}_X (K\cap X)$ and $T\cap X$   shows there are disjoint $X$-closed sets $A$ and $B$ such that $X\setminus T=A\cup B$, $A\cap K\neq\varnothing$, and $B\cap K\neq\varnothing$.   Observe that $$U:=A\setminus \cl_{\beta X} T\;\;\text{ and }\;\; V:=B\setminus \cl_{\beta X} T$$ are disjoint open subsets of $X$ each intersecting $K$. 

Put $W=\beta X\setminus \cl_{\beta X} T$.  Then  $W\cap X=U\cup V$, so density of $X$ in $\beta X$ implies  $W\subseteq \cl_{\beta X} U \cup \cl_{\beta X} V$.   All things considered,  $$W_0:=W\cap \cl_{\beta X} U\;\;\text{ and }\;\;W_1:=W\cap \cl_{\beta X} V$$ are relatively closed subsets of $W$, $K\subseteq W=W_0\cup W_1$, and $W_i\cap K\neq\varnothing$ for each $i< 2$. 

To complete the proof that $K$ is not connected, it  suffices to show $W_0\cap W_1=\varnothing$. Well, for a contradiction suppose there exists $p\in W_0\cap W_1$. By Urysohn's Lemma there is a mapping $F:\beta X \to [0,1]$ such that $F(p)=0$ and $F[\beta X \setminus W]=1$. Define $f:X\to [0,1]$ by 
	$$f(x)=\begin{cases}
	1 &\text{if } x\in U\\
	F(x) &\text{if } x\notin U.
	\end{cases}$$
If $\alpha$ is open in $[0,1]$, then 
	$$f^{-1}[\hspace{.5mm}\alpha\hspace{.5mm}]=
	\begin{cases}
	\big(F^{-1}[\hspace{.5mm}\alpha\hspace{.5mm}]\cap X\big)\cup U &\text{if } 1\in \alpha\\
	F^{-1}[\hspace{.5mm}\alpha\hspace{.5mm}]\cap V &\text{if } 1\notin\alpha,
	\end{cases}$$  
\noindent so $f$ is continuous.  Let $\beta f:\beta X\to [0,1]$ be the Stone-\v{C}ech extension of $f$. Since $p\in \cl_{\beta X} V$ and $f\restriction V=F\restriction V$, we have $\beta f(p)=F(p)=0$. On the other hand, $p\in \cl_{\beta X} U$ implies that $\beta f (p)=1$, a contradiction.
\end{proof}

\begin{uc}\label{5}$\beta X$ is an indecomposable continuum if and only if $X$ is strongly indecomposable and connected.  \end{uc}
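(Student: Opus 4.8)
The plan is to reduce the corollary to Theorem \ref{4} together with the standard fact that $\beta X$ is connected if and only if $X$ is connected. Since $X$ is Tychonoff, $\beta X$ is always compact and Hausdorff; hence ``$\beta X$ is a continuum'' means precisely ``$\beta X$ is connected.'' So the only content beyond Theorem \ref{4} lies in tracking connectedness back and forth across the embedding $X\hookrightarrow\beta X$.

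For the forward direction, I would assume $\beta X$ is an indecomposable continuum. Being indecomposable, Theorem \ref{4} immediately yields that $X$ is strongly indecomposable. Being a continuum, $\beta X$ is connected, and I would deduce that $X$ is connected as follows: were $X$ disconnected, say $X=A\cup B$ with $A,B$ non-empty, disjoint, and clopen in $X$, the two-valued function sending $A$ to $0$ and $B$ to $1$ is continuous into $[0,1]$, and its Stone-\v{C}ech extension $\beta X\to\{0,1\}$ is onto, contradicting connectedness of $\beta X$.

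For the converse, I would assume $X$ is strongly indecomposable and connected. Theorem \ref{4} gives that $\beta X$ is indecomposable. Since the closure of a connected set is connected and $X$ is dense in $\beta X$, the space $\beta X=\cl_{\beta X} X$ is connected; being also compact and Hausdorff, it is a continuum. Hence $\beta X$ is an indecomposable continuum.

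There is essentially no obstacle here: the statement is merely a repackaging of Theorem \ref{4} with the elementary equivalence ``$X$ connected $\iff$ $\beta X$ connected.'' The only point requiring minor care is the disconnected-to-disconnected implication, for which I rely on the universal property of $\beta X$ as above; this is where I would be most explicit in order to keep the argument self-contained.
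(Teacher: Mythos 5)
Your proposal is correct and follows exactly the paper's route: the corollary is Theorem \ref{4} combined with the standard equivalence ``$\beta X$ is connected if and only if $X$ is connected,'' which the paper simply cites while you additionally verify it via the universal property of $\beta X$ and the closure of a connected dense set. The extra verification is sound but adds nothing beyond making the cited fact self-contained.
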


\begin{proof}This follows from Theorem \ref{4} since $\beta X$ is a continuum if and only if $X$ is connected. \end{proof}

\begin{ut}\label{6}Let $X$ be a connected space. 
\begin{enumerate}[label=\textnormal{(\roman*)}]
\item If $X$ has  an irreducible compactification, then $\beta X$ is irreducible.
\item If $X$ has  an indecomposable compactification, then $\beta X$ is indecomposable.
\item If $X$ is indecomposable,  then every irreducible compactification of $X$ is indecomposable.\footnote{In other words, a decomposable compactification with three composants is not possible.}
\end{enumerate}

\end{ut}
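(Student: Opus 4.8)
The plan is to handle the three parts in the order (ii), (i), (iii), since (ii) is immediate from the machinery already assembled and sets the template. For (ii), suppose $X$ has an indecomposable compactification $\gamma X$. As $\gamma X$ is compact Hausdorff, Lemma \ref{2} makes it strongly indecomposable, and as $X$ is dense in $\gamma X$, Lemma \ref{3} transfers strong indecomposability down to $X$; Theorem \ref{4} then yields that $\beta X$ is indecomposable. That is the entire argument, and I foresee no obstacle: connectedness of $X$ is used only to the extent that it guarantees we are dealing with continua.

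For (i), let $\gamma X$ be irreducible, say between $a$ and $b$, and let $g\colon\beta X\to\gamma X$ be the canonical continuous surjection restricting to the identity on $X$. I would argue by contradiction, assuming $\beta X$ is reducible (a single composant), so that every two points of $\beta X$ lie in a common proper subcontinuum. In particular, fixing $p\in g^{-1}(a)$, each $q\in g^{-1}(b)$ is joined to $p$ by a proper subcontinuum $K$ of $\beta X$; applying $g$ gives a subcontinuum $g(K)\ni a,b$, so irreducibility of $\gamma X$ between $a$ and $b$ forces $g(K)=\gamma X$. Thus everything hinges on proper subcontinua of $\beta X$ that surject onto $\gamma X$. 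When no such surjection is proper — for instance when $X$ is open in $\gamma X$, so that $g$ is injective over $X$ and any subcontinuum mapping onto $\gamma X\supseteq X$ must contain $X$ and hence equal $\beta X$ — every proper subcontinuum through $p$ has image in the composant of $a$, which omits $b$, and the contradiction closes at once. The main obstacle is that for a general (non–locally-compact) connected $X$ the fibers $g^{-1}(x)$ over points $x\in X$ can be strictly larger than $\{x\}$, so proper subcontinua of $\beta X$ may genuinely surject onto $\gamma X$; I expect the real work to lie here, presumably via a minimal surjecting subcontinuum combined with a Urysohn-type separation as in the proof of Theorem \ref{4}.

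For (iii), suppose $X$ is indecomposable and $\gamma X$ is an irreducible compactification; assume for contradiction that $\gamma X$ is decomposable. A decomposable irreducible continuum has exactly three composants and admits a monotone surjection $\mu\colon\gamma X\to[0,1]$ whose point-preimages are its layers, the two proper composants being the open dense connected sets $\mu^{-1}([0,1))$ and $\mu^{-1}((0,1])$ (the structure theorem for irreducible continua, \cite{kur} \S48). Since for subsets of $X$ density and nowhere-density in $X$ coincide with the corresponding notions in $\gamma X$, my objective is to produce, purely from this layer structure together with connectedness and density of $X$, a connected subset of $X$ that is neither dense nor nowhere dense — which contradicts indecomposability of $X$. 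The natural candidate is a proper subcontinuum with nonempty interior such as $N=\mu^{-1}([0,\tfrac12])$: its trace $N\cap X$ is closed in $X$, proper, and has nonempty interior, so it is already neither dense nor nowhere dense, and I would be done if it were connected. This is exactly the heart of the difficulty, and the step I expect to be the main obstacle: $N\cap X$ need not be connected — indeed for a widely-connected $X$ every non-dense connected subset is a single point, so intersecting with a half-space cannot work naively. I would attack this by using that $\gamma X$ is irreducible rather than merely decomposable: each layer $\mu^{-1}(t)$ with $0<t<1$ separates $\gamma X$ into the open sets $\mu^{-1}([0,t))$ and $\mu^{-1}((t,1])$, so connectedness of $X$ forces $X$ to meet every such layer, and I would try to leverage this separating behavior, with the connectedness of $X$ and of the individual layers, to pin down a level $t$ at which the trace $X\cap\mu^{-1}([0,t])$ is forced to be connected (this is automatic for the arc, where $X$ is a subinterval, and irreducibility should be what propagates it in general).

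In short, (ii) falls out directly from Lemmas \ref{2}–\ref{3} and Theorem \ref{4}; the genuine work is concentrated in the transfer of properness along $g$ for (i) and in the connectedness of the trace on a proper subcontinuum for (iii), and I expect the irreducibility hypothesis to be indispensable in overcoming each.
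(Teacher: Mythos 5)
Your part (ii) is correct and is exactly the paper's argument: Lemmas \ref{2} and \ref{3} pull strong indecomposability from the compactification down to $X$, and Theorem \ref{4} lifts it to $\beta X$. Parts (i) and (iii), however, each contain a genuine gap. In (i) the gap is the premise on which your whole plan rests: you claim that for non-locally-compact $X$ the fibers of the canonical map $g=\beta\iota\colon\beta X\to\gamma X$ over points $x\in X$ can be strictly larger than $\{x\}$, and you defer ``the real work'' to that case. That case does not exist. For every Tychonoff $X$ and every compactification $\gamma X$, the Stone-\v{C}ech extension of the inclusion satisfies $\beta\iota[\beta X\setminus X]=\gamma X\setminus X$ (a standard property of $\beta X$; see \cite{eng} \S 3.5 or \cite{KH}, and it is precisely the fact the paper quotes), so for $x\in X$ the fiber $\beta\iota^{-1}(x)$ meets only $X$ and therefore equals $\{x\}$. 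It follows at once that no proper subcontinuum of $\beta X$ maps onto $\gamma X$: if $\beta\iota[K]=\gamma X\supseteq X$, then $X\subseteq K$ by the singleton fibers, hence $K\supseteq\cl_{\beta X}X=\beta X$. Now take $p'\in\beta\iota^{-1}(a)$ and $q'\in\beta\iota^{-1}(b)$: a proper subcontinuum of $\beta X$ containing both would map onto a proper subcontinuum of $\gamma X$ containing $a$ and $b$, contradicting irreducibility. So $\beta X$ is irreducible between $p'$ and $q'$. No contradiction argument, minimal surjecting subcontinuum, or Urysohn-type separation is needed; as written, your (i) establishes the statement only for locally compact $X$.

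For (iii) the route you chose is unavailable, and by your own account incomplete. Theorem \ref{6} concerns arbitrary compactifications of arbitrary connected Tychonoff spaces, so $\gamma X$ need not be metrizable, while the layer structure you invoke (\cite{kur} \S 48) is a theory of metric continua. Even in the metric case your structural claim is false: a monotone surjection $\mu\colon\gamma X\to[0,1]$ whose fibers are nowhere dense layers exists if and only if the irreducible continuum is of type $\lambda$ (every subcontinuum with nonempty interior is decomposable); a bucket-handle continuum with an arc attached at an endpoint is decomposable and irreducible but not of type $\lambda$, so it admits no such $\mu$. And even where the structure exists, you concede you cannot prove connectedness of $X\cap\mu^{-1}([0,t])$, the step you yourself call the heart of the matter. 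The paper's proof is elementary, works for arbitrary Hausdorff compactifications, and runs in the opposite direction: instead of trying to contradict indecomposability of $X$, it uses it. One proves that if $X$ is indecomposable then every \emph{decomposable} compactification is reducible. Write $\gamma X=H\cup K$ with $H,K$ proper subcontinua and take $p\in H$, $q\in K\setminus H$. The trace $X\setminus H$ is a nonempty open subset of $X$ that is not dense (since $H$ has nonempty interior and $X$ is dense), so by indecomposability of $X$ it is disconnected: $X\setminus H=U\cup V$ with $U,V$ nonempty disjoint open, and without loss of generality $q\in\cl_{\gamma X}U$. Then $H\cup U$ is connected, so $H\cup\cl_{\gamma X}U$ is a subcontinuum containing $p$ and $q$; it misses $V$, hence is proper. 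Thus $\gamma X$ is reducible between every two of its points, i.e.\ has a single composant, which is the contrapositive of (iii).
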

\begin{proof}[Proof of (i)] Suppose $\gamma X$ is an irreducible compactification. Let $p$ and $q$ be such that $\gamma X$ is irreducible between them.  The Stone-\v{C}ech extension $\beta\iota:\beta X\to \gamma X$ of the inclusion $\iota:X\hookrightarrow \gamma X$  satisfies $\beta\iota[\beta X\setminus X]=\gamma X\setminus X$. As a result, $\beta\iota$ maps onto $\gamma X$, and maps proper subcontinua to proper subcontinua (in the words of \cite{bellll}, $\beta\iota$  maps $\beta X$  \textit{irreducibly} onto $\gamma X$).\footnote{Theorem \ref{6}(i) is really a special case of the Proposition before Corollary 4 in \cite{bellll}.}    So there exists $\langle p',q'\rangle \in \beta\iota^{-1}\{p\}\times \beta\iota^{-1}\{q\}$, and $\beta X$ is irreducible between $p'$ and $q'$.\end{proof}

\begin{proof}[Proof of (ii)]Suppose $X$ has an indecomposable compactification. By Lemmas \ref{2} and \ref{3}, $X$ is strongly indecomposable.  By Theorem \ref{4}, $\beta X$ is indecomposable.  Alternatively, if $\beta X$ is the union of two proper subcontinua $H$ and $K$, and $\gamma X$ is any compactification of $X$, then $\gamma X$   is the union of  proper subcontinua $\beta\iota[H]$ and $\beta\iota[K]$ ($\beta\iota$  from the proof of (i)).  That proves the contrapositive of (ii).  \end{proof}

\begin{proof}[Proof of (iii)]Suppose $X$ is indecomposable.  We prove every decomposable compactification of $X$ is reducible (this is the less awkward approach).   To that end, suppose  $\gamma X$ is a compactification which  decomposes into two proper subcontinua $H$ and $K$; $\gamma X=H\cup K$. Let $\langle p,q\rangle\in \gamma  X^2$. Our goal is to show  $\gamma X$ is reducible between $p$ and $q$, so we may clearly assume  $p\in H$ and $q\in K\setminus H$.  By indecomposability of $X$  there are non-empty disjoint $X$-open sets $U$ and $V$ such that $X\setminus H=U\cup V$.  Without loss of generality, $q\in \cl_{\gamma X} U$.  Observe that $H\cup U$ is connected since $H$ and $X$ are connected. The continuum $H\cup \cl_{\gamma X} U$ witnesses that $\gamma X$ is reducible between $p$ and $q$.   Since $p$ and $q$ were arbitrary, our proof is complete.
\end{proof}


\begin{uc}\label{7}If $\gamma W$ is an irreducible compactification of a widely-connected space $W$, then $\gamma W$ is irreducible between two points of the remainder $\gamma W \setminus W$.\end{uc}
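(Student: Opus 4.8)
The engine of the argument will be a single observation about proper subcontinua, and the plan is to extract the whole corollary from it. First I would prove: if $C\subseteq\gamma W$ is a subcontinuum with $C\neq\gamma W$, then $C\cap W$ is hereditarily disconnected. Indeed, a nondegenerate connected $D\subseteq C\cap W$ would be dense in $W$ by wide-connectedness, so $\cl_{\gamma W}D=\gamma W$; but $\cl_{\gamma W}D\subseteq C\subsetneq\gamma W$, a contradiction. Using the standard fact that a nonempty open subset of a nondegenerate continuum contains a nondegenerate subcontinuum, two consequences follow that I would record at once: (a) $R:=\gamma W\setminus W$ is dense, since $W$ cannot have interior — a nondegenerate subcontinuum inside $\mathrm{int}\,W$ would be both dense in $W$ and closed in $\gamma W$; and (b) every nondegenerate proper subcontinuum $C$ meets $R$, in fact $C\cap R$ is dense in $C$, because $C\cap W$ can contain no nonempty set open in $C$.

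Next I would reduce the corollary to one claim about composants. For $x\in\gamma W$ write $P_x$ for its composant and $I_x:=\gamma W\setminus P_x$ for the set of points from which $\gamma W$ is irreducible to $x$; thus $\gamma W$ is irreducible between $p$ and $q$ exactly when $q\in I_p$. The claim I want is: whenever $I_x\neq\varnothing$, the set $I_x$ meets $R$. Granting this, the corollary follows by a two-step replacement. Since $\gamma W$ is irreducible, fix $a,b$ with $\gamma W$ irreducible between them, so $a\in I_b\neq\varnothing$; choose $a'\in I_b\cap R$. Then $\gamma W$ is irreducible between $a'$ and $b$, and in particular $b\in I_{a'}\neq\varnothing$; choosing $b'\in I_{a'}\cap R$ yields two remainder points $a',b'$ between which $\gamma W$ is irreducible.

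It remains to establish the claim, and the two regimes behave very differently. If $\gamma W$ is indecomposable the claim is immediate: composants are then pairwise disjoint, so $I_x$ is the union of all composants other than $P_x$, and each such composant contains a nondegenerate proper subcontinuum (by boundary bumping) and therefore meets $R$ by (b). The general irreducible case is where the work lies. Invoking the decomposition of an irreducible continuum into its layers (cf.\ \cite{kur} \S48), $I_x$ is the ``opposite'' end layer, a subcontinuum; if it is nondegenerate then (b) again places a point of $R$ in it. I expect the main obstacle to be the degenerate configuration in which this opposite layer is a single point of $W$ — equivalently the arc-type situation $P_a=\gamma W\setminus\{b\}$ and $P_b=\gamma W\setminus\{a\}$ with $a,b\in W$ — in which neither endpoint can be pushed into $R$. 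Excluding this is the crux, and it cannot come from (a)/(b) alone: a widely-connected $W$ typically does carry hereditarily disconnected open sets (any $V$ with $\cl_{\gamma W}V\neq\gamma W$ gives one, by the argument above), so local disconnectedness is no contradiction. The exclusion must instead exploit the finer interaction between wide-connectedness and the layer structure of $\gamma W$ — for instance by showing that such an $a\in W$ would force a proper subcontinuum to absorb a dense connected piece of $W$ — and this single degenerate case is the point I would expect to demand the most care.
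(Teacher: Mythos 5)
Your opening lemma and your indecomposable case are both correct and are exactly the paper's argument: a nondegenerate proper subcontinuum $C$ of $\gamma W$ cannot be contained in $W$ (wide-connectedness would make $C$ dense in $W$, hence dense in $\gamma W$, yet $C$ is closed and proper), so every nondegenerate proper subcontinuum meets the remainder; when $\gamma W$ is indecomposable, two distinct composants are disjoint, each contains a nondegenerate proper subcontinuum by boundary bumping, and the corollary follows. The problem is your other branch. You split into ``$\gamma W$ indecomposable'' and ``$\gamma W$ irreducible but decomposable,'' and in the second branch you explicitly leave the degenerate configuration unresolved, so the proposal is not a complete proof. Worse, the reduction of that branch to the single ``arc-type'' configuration rests on a false claim: $I_x=\gamma W\setminus P_x$ need not be an end layer, nor even a subcontinuum, of a decomposable irreducible continuum. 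For example, glue two bucket-handles $H$ and $K$ at a single point $c$ and let $x\in H$ lie in a composant of $H$ missing $c$; then $P_x=H\cup\kappa$, where $\kappa$ is the composant of $c$ in $K$, so $I_x=K\setminus\kappa$ is a dense, non-closed union of uncountably many composants of $K$. So even the shape of the case you could not handle is not justified.

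The missing idea is the paper's Theorem 6(iii): every irreducible compactification of an \emph{indecomposable space} is indecomposable (equivalently, a decomposable compactification of an indecomposable space is reducible between every two of its points). Since a widely-connected space is indecomposable as a space, this applies to $W$ and makes your decomposable branch vacuous: the hypothesis that $\gamma W$ is irreducible already forces $\gamma W$ to be indecomposable, after which your first branch finishes the proof with no layers, no $I_x$, and no two-step replacement needed. If you want this self-contained, the argument is short: suppose $\gamma W=H\cup K$ with $H,K$ proper subcontinua, and let $p,q\in\gamma W$, say $p\in H$ and $q\in K\setminus H$. The set $W\setminus H$ is open in $W$, nonempty, and not dense (it misses the nonempty open set $W\cap\operatorname{int}_{\gamma W}H$), so by indecomposability of $W$ it is disconnected: $W\setminus H=U\cup V$ with $U,V$ nonempty, open in $W$, and separated. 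Without loss of generality $q\in\cl_{\gamma W}U$; then $H\cup\cl_{\gamma W}U$ is a subcontinuum containing $p$ and $q$ which misses $V$, hence is proper. Thus $\gamma W$ is reducible between every two of its points, contradicting irreducibility. This single lemma is what converts your partial argument into the paper's three-line proof.
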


\begin{proof}By hypothesis $\gamma W$ has two composants $P\neq Q$. Each composant contains a non-degenerate proper subcontinuum of $\gamma W$ (cf. `Boundary Bumping' Lemma 6.1.25 in \cite{eng}), so by the widely-connected property of $W$ there exists $p\in P\setminus W$ and $q\in Q\setminus W$. By Theorem \ref{6}(iii)  $\gamma W$ is indecomposable, so $P\cap Q=\varnothing$ and $\gamma W$ is irreducible between $p$ and $q$.\end{proof}

\begin{ur}Typically,  $\beta W$ is indecomposable when $W$ is widely-connected.  This is due to the fact that most widely-connected sets are constructed as dense subsets of indecomposable (metric) continua.    Gary Gruenhage  \cite{gru}  constructed  completely regular and perfectly normal examples by more technical set-theoretic methods, assuming   Martin's Axiom and the Continuum Hypothesis,  respectively.  Both of his examples are strongly indecomposable,   so their Stone-\v{C}ech compactifications are indecomposable.  Moreover, all of their co-infinite subsets are totally disconnected (it's worth noting that there is no connected \textit{metric} space with this property). So by the proof of Theorem \ref{4}, for  Gruenhage's $W$ we get $\abs{W\cap P}\leq 1$ for every composant $P$ of $\beta W$ (i.e. $\beta W$ is irreducible between every two points of $W$). This property is shared by Paul Swingle's original widely-connected sets in  \cite{swi}.\end{ur}

\section{Results for separable metrizable spaces}\label{lop}

Throughout this section,  $X$ is separable and metrizable.  

\begin{ut}\label{8}Every indecomposable connected compactification of  $X$ is irreducible.\footnote{There is a  non-metrizable indecomposable continuum with only one composant \cite{bell}, so the assumptions about $X$  are critical.}\end{ut}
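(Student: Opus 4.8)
The plan is to show that $\gamma X$ has more than one composant; since $\gamma X$ is indecomposable its composants are pairwise disjoint, so it suffices to find a single point $p$ whose composant $\kappa(p)$ is a \emph{proper} subset of $\gamma X$ (any $q\notin\kappa(p)$ then lies in a second composant). First I would record the structural facts that come for free. As $\gamma X$ is an indecomposable connected compactification, Lemmas \ref{2} and \ref{3} show $X$ is strongly indecomposable, and connectedness of $\gamma X$ forces connectedness of $X$, so Corollary \ref{5} gives that $\beta X$ is itself an indecomposable continuum. I would also invoke two standard facts: in any indecomposable continuum every proper subcontinuum is nowhere dense, and $\gamma X$, being compact Hausdorff, is a Baire space.

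The engine is then a Mazurkiewicz-type observation: \emph{if every composant of $\gamma X$ is of the first category, then $\gamma X$ cannot be its own unique composant}, for a single composant would be a first-category subset equal to a Baire space, which is impossible; hence $\kappa(p)\neq\gamma X$ for each $p$ and $\gamma X$ is irreducible. So the heart of the argument is to prove that each composant $\kappa(p)=\bigcup\{K:K\text{ a proper subcontinuum},\ p\in K\}$ is first category in $\gamma X$. This is exactly where separability and---crucially---metrizability of $X$ enter. Fixing a countable base $\{B_n\}$ of $X$ together with a compatible metric, I would organize the proper subcontinua through $p$ into countably many classes, indexed by which basic set a subcontinuum's trace on $X$ avoids and by a rational lower bound on how far that trace stays from the set, mimicking Mazurkiewicz's proof that a metric indecomposable continuum has $\mathfrak c$ composants. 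The metric control is designed so that the closure of the union over each class is a \emph{proper} subcontinuum---hence nowhere dense---thereby exhibiting $\kappa(p)$ as a countable union of nowhere-dense sets.

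The main obstacle is precisely this first-category claim, and it is where the hypotheses are delicate. Separability of $\gamma X$ alone does not suffice: by Bellamy's non-metrizable indecomposable continuum with a single composant (the example in the footnote), a merely separable indecomposable continuum can equal its own unique composant, so any valid argument must use the metric on $X$, not just the countable dense set it provides. Concretely, the danger is that a naive countable grouping of the subcontinua through $p$ yields unions whose closures all exhaust $\gamma X$; ruling this out is exactly what the uniform metric bound on the traces in each class is meant to prevent. As an alternative packaging of the same idea I would embed $X$ densely in the Hilbert cube to obtain a metric compactification $mX$ and form the diagonal compactification $\delta X=\gamma X\vee mX$, whose coordinate projections onto $\gamma X$ and $mX$ extend the identity on $X$ and send the remainder into the remainder, hence map proper subcontinua to proper subcontinua (as in the proof of Theorem \ref{6}(i)); this makes $\delta X$ indecomposable and lets composant data be transported between the spaces. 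The subtlety to keep in view is that $mX$ itself need not be indecomposable---this is the very difficulty underlying Question (B)---so one cannot simply apply Mazurkiewicz to $mX$; the metric Baire-category argument must instead be arranged to deposit composant multiplicity back onto $\gamma X$ itself.
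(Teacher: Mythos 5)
Your global strategy coincides with the paper's: prove that every composant of $\gamma X$ is a first-category $F_\sigma$-set, then use the Baire property of the compact Hausdorff space $\gamma X$ to conclude that no single composant can exhaust $\gamma X$, hence there are at least two. The gap is in the step carrying all the weight, the first-category claim. You sort the proper subcontinua $K\ni p$ into countably many classes by data about the trace $K\cap X$ --- which basic set $B_n$ it avoids, and a rational lower bound $q$ on the distance from $K\cap X$ to $B_n$ --- and you want the closure of the union of each class to be a proper subcontinuum. But this data controls only the points of $K$ lying in $X$ and says nothing about $K\cap(\gamma X\setminus X)$. The metric of $X$ does not extend to $\gamma X$, so a subcontinuum whose trace stays at distance $\geq q$ from $B_n$ can still contain remainder points inside $\cl_{\gamma X}B_n$, or anywhere else; consequently the union over a single class can have its remainder points dense in $\gamma X$, and its closure can be all of $\gamma X$. (There are also subcontinua your classes miss outright: if $p$ is a remainder point, a proper subcontinuum through $p$ may be disjoint from $X$; and even when $K\cap X\neq\varnothing$, there need be no positive lower bound on its distance to a basic set it avoids.) So the failure mode you yourself flag --- ``unions whose closures all exhaust $\gamma X$'' --- is not actually ruled out by the metric margins, and your fallback via the diagonal compactification $\gamma X\vee mX$ is, as you concede, not a proof either. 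A side error: your claim that connectedness of $\gamma X$ forces connectedness of $X$ is false (dense subsets of continua need not be connected), though nothing essential rests on it.

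The paper closes exactly this gap by doing the bookkeeping with open subsets of $\gamma X$ rather than with traces and distances in $X$. Extend each member $U_n$ of a countable base of $X$ to a $\gamma X$-open set $U'_n$ with $U'_n\cap X=U_n$. Regularity of $\gamma X$ and density of $X$ give a network property: for every $x\in X$ and $\gamma X$-open $W\ni x$ there is $n$ with $x\in U'_n\subseteq W$; in particular every nonempty open subset of $\gamma X$ contains some $U'_n$, since it meets $X$. Now let $P_n$ be the component of $p$ in the closed set $\gamma X\setminus U'_n$ (and $P_n=\varnothing$ if $p\in U'_n$). Any proper subcontinuum $K\ni p$ misses a nonempty open subset of $\gamma X$, hence misses some $U'_n$, hence $K\subseteq P_n$; thus the composant of $p$ equals $\bigcup_n P_n$. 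Each nonempty $P_n$ is a subcontinuum that avoids $U'_n$ \emph{by construction}, so properness --- and therefore nowhere density, by indecomposability of $\gamma X$ --- is automatic; this is precisely what your metric margins were straining, and failing, to deliver. Note finally that the argument uses only second countability of $X$, not the metric itself; your diagnosis that ``the metric must enter'' is what steered you toward the unworkable distance-based classes.
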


\begin{proof}Suppose $\gamma X$ an indecomposable connected compactification. Let $\{U_n:n<\omega\}$ be a basis for $X$ consisting of non-empty open sets. For each $n<\omega$ let $U'_n$ be open in
$\gamma X$ such that $U'_n\cap X=U_n$. 

The collection $\{U'_n:n<\omega\}$ is a countable network for $X$ in $\gamma X$. That is, for every $x\in X$ and $\gamma X$-open  $W\ni x$  there exists $n<\omega$ such that $x\in U'_n\subseteq W$.  Indeed, by regularity there is a $\gamma X$-open set $V$ such that $x\in V\subseteq \cl_{\gamma X}V\subseteq W$. There exists $n<\omega$ such that $x\in U_n\subseteq V\cap X$.  Then by density of $X$ in $\gamma X$ we have $x\in U'_n\subseteq \cl_{\gamma X}U_n\subseteq \cl_{\gamma X}V\subseteq W$.   

To show that $\gamma X$ has (at least) two disjoint composants it suffices to show that every composant is a first category $F_\sigma$-set.  
Let $P$ be the composant of a point $p\in\gamma X$. For each $n<\omega$ such that $p\notin U'_n$, let $P_n$ be the component of $p$ in  $\gamma X\setminus U'_n$. If $p\in U'_n$ then set $P_n=\varnothing$. Obviously $\bigcup \{P_n:n<\omega\} \subseteq P$. On the other hand, if $K$ is a proper subcontinuum of $\gamma X$ with $p\in K$, then  there exists $n<\omega$ such that $U'_n\subseteq \gamma X\setminus K$.  Then $K\subseteq P_n$.  So $P\subseteq \bigcup \{P_n:n<\omega\}$.  Combining the two inclusions, we have $P= \bigcup \{P_n:n<\omega\}$. Each $P_n$ is closed and nowhere dense by indecomposability of $\gamma X$, thus $P$ is an $F_\sigma$-set of the first category.   \end{proof}

\begin{ur}Combining Theorems \ref{6}(iii) and  \ref{8}, we find that indecomposability and irreducibility are equivalent in compactifications of  widely-connected separable metric spaces.  For example, the Hilbert cube $[0,1]^\omega$, which is  a canonical compactification for widely-connected separable metric spaces,\footnote{The main result of   \cite{bow} is that every nowhere compact separable metric  space densely embeds into the Hilbert space $\ell ^2\simeq (0,1)^\omega$.  Widely-connected Hausdorff spaces have no compact neighborhoods (applying Theorem  6.2.9 in \cite{eng} to a compact neighborhood $N\neq W$ would show $W$ is not connected). So every widely-connected separable metric space has a compactification equal to $[0,1]^\omega$.} satisfies neither condition.   There is also dense widely-connected subset of the plane \cite{swi3} which, naturally,  has a compactification homeomorphic to $[0,1]^2$ (another decomposable continuum with only one composant).\end{ur}

\begin{ut}\label{9}If $X$ is strongly indecomposable, then $X$ has a metrizable indecomposable compactification.
\end{ut}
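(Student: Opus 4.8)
The plan is to build the desired compactification by finding a single continuous map from $X$ into a metric continuum that separates points well enough, then taking the closure of its image. Since $X$ is separable metrizable and strongly indecomposable, the first step is to exploit Theorem \ref{4}: $\beta X$ is indecomposable. However $\beta X$ is not metrizable, so I cannot use it directly; instead I want to realize $X$ as a dense subset of a \emph{metric} indecomposable continuum. The natural ambient space is the Hilbert cube $[0,1]^\omega$, into which every separable metric space embeds. So I would fix an embedding $e:X\hookrightarrow [0,1]^\omega$ and consider $M:=\cl_{[0,1]^\omega} e[X]$, which is automatically a metric compactification of $X$. The whole difficulty is that this $M$ need not be indecomposable: the closure operation can introduce large subcontinua that destroy indecomposability.

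The key idea to fix this is that indecomposability of $\beta X$ should descend to \emph{some} metric compactification via a countable amount of separating data. Concretely, I would look for a countable family of continuous functions $f_k:X\to[0,1]$ whose diagonal map $\Delta_k f_k:X\to[0,1]^\omega$ yields a compactification $\gamma X:=\cl(\Delta_k f_k)[X]$ that is indecomposable. To manufacture these functions, I would revisit the proof of Theorem \ref{4}: there, given disjoint open $U,V$ in $X$ witnessing that a proper closed set with interior fails to be connected, one builds an explicit function $f:X\to[0,1]$ detecting the separation. Since $X$ is second countable, there are only countably many ``basic'' pairs $(U,V)$ drawn from a fixed countable base, and for each such pair the strong indecomposability condition 2.1--2.4 supplies closed sets $A,B$ and hence (via Urysohn on $\beta X$, exactly as in Theorem \ref{4}) a function separating the relevant closures. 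Collecting one such function for each basic pair gives a countable family $\{f_k\}$.

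Then I would argue that $\gamma X$ built from this countable family is indecomposable. The verification should parallel the proof of Theorem \ref{4} almost verbatim: given a proper closed $K\subseteq\gamma X$ with nonempty interior, pick a basic pair $(U,V)$ separating $K$ from a set $T$ as in that proof, use the associated $f_k$ to produce the clopen-in-$W$ splitting $W=W_0\cup W_1$, and finally use that $f_k$ extends continuously to all of $\gamma X$ (since $f_k$ was a coordinate of the defining diagonal map) to derive the same contradiction $\beta f_k(p)=0$ and $\beta f_k(p)=1$ that closed the original argument. Because $\gamma X$ is a continuous metrizable image determined by countably many coordinates, it is a metric continuum, and the separation argument shows every proper closed set with interior is disconnected, i.e.\ $\gamma X$ is indecomposable.

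The main obstacle, and the step I would be most careful about, is ensuring that the countable family $\{f_k\}$ really suffices to detect \emph{every} potential decomposition of $\gamma X$, not merely the ones coming from basic pairs in $X$. The subtlety is that a proper subcontinuum of $\gamma X$ can meet the remainder $\gamma X\setminus X$, so I must check that the separating open sets needed in the Theorem \ref{4}-style argument can always be taken from the fixed countable base of $X$ (using that $X$ is dense and that the $f_k$ restrict to detect $X$-open separations), and that the corresponding $f_k$ was indeed included in the diagonal. Reducing the uncountably many Urysohn functions used in the proof of Theorem \ref{4} to a countable separating subfamily indexed by a base is precisely where second countability of $X$ must be invoked, and getting that bookkeeping right is the heart of the proof.
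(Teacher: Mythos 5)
Your proposal is correct, and its skeleton matches the paper's proof: both arguments reduce everything to the countably many ordered pairs $\langle i,j\rangle$ of disjoint basic open sets, use strong indecomposability to produce disjoint closed sets $A_{ij},B_{ij}$ with $X\setminus U_i=A_{ij}\cup B_{ij}$ and both pieces meeting $U_j$, and then seek a metrizable compactification in which each pair $A_{ij},B_{ij}$ has disjoint closures --- after which the verification (given a proper closed $K$ with nonempty interior, find $U_i$ whose closure misses $K$ and $U_j$ inside $K$, and conclude that the two disjoint closures split $K$) is identical to the paper's. Where you genuinely diverge is in how that compactification is produced. The paper runs a Baire category argument in the function space $([0,1]^\omega)^X$: by Kuratowski's results in \S44 of \cite{kur}, for each pair the maps $g$ with $\overline{g[A_{ij}]}\cap\overline{g[B_{ij}]}=\varnothing$ form a dense open set, and the embeddings form a dense $G_\delta$, so a \emph{generic} map is an embedding $h$ realizing all the disjointnesses at once, and $\overline{h[X]}$ is the desired compactification. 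You instead build the compactification explicitly, as the closure of the image of a diagonal (evaluation) map whose coordinates include, for each pair, a function $f_{ij}$ separating $A_{ij}$ from $B_{ij}$; disjointness of the closures in $\gamma X$ is then automatic, since that coordinate extends continuously over $\gamma X$ with value $0$ on one closure and $1$ on the other. Your route is more elementary --- it avoids the function-space machinery entirely --- at the cost of carrying the separating functions along as explicit coordinates. Two pieces of bookkeeping should be made explicit to finish it. First, the diagonal of the $f_k$'s alone need not be a homeomorphic embedding, so you must also fold the coordinates of your fixed embedding $e:X\hookrightarrow[0,1]^\omega$ into the diagonal (you fix $e$ at the outset but never actually use it in the construction); this costs nothing in the verification. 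Second, the detour through $\beta X$ and the Urysohn function of Theorem \ref{4} is unnecessary: since $A_{ij}$ and $B_{ij}$ are disjoint closed subsets of the metric space $X$, Urysohn's lemma applied in $X$ itself (or simply $f_{ij}(x)=d(x,A_{ij})/(d(x,A_{ij})+d(x,B_{ij}))$) hands you the separating coordinate directly, and its extension to $\gamma X$ is just the coordinate projection. Your final worry --- that the separating data can always be drawn from the fixed countable base --- resolves exactly as you suspect: regularity of $\gamma X$ and density of $X$ give a basic $U_j$ inside $\mathrm{int}_{\gamma X}(K)\cap X$ and a basic $U_i$ with $\cl_{\gamma X}U_i\cap K=\varnothing$, so the needed pair lies in $\Pi$ and its $f_{ij}$ was included. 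With those points nailed down, your argument is complete.
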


\begin{proof}Let  $\{U_i:i<\omega\}$ be a basis for $X$ consisting of non-empty open sets.  We apply strong indecomposability to each ordered pair of disjoint basic sets. Let $\Pi=\{\langle i,j\rangle\in\omega^2:U_i\cap U_j=\varnothing\}$. For each  $\langle i,j\rangle\in \Pi$ there are disjoint closed sets $A_{\langle i,j\rangle}$ and $B_{\langle i,j\rangle}$ such that $X\setminus U_i=A_{\langle i,j\rangle}\cup B_{\langle i,j\rangle}$, $A_{\langle i,j\rangle}\cap U_j\neq\varnothing$, and $B_{\langle i,j\rangle}\cap U_j\neq\varnothing$. 

By \cite{kur} \S44 V Corollary 4a and  \cite{kur} \S44 VI Lemma, 
$$\;\;\;\;\;\;\Big\{g\in \big([0,1]^\omega\big)^X:\overline {g[A_{\langle i,j\rangle}]}\cap \overline{g[B_{\langle i,j\rangle}]}=\varnothing\Big\}\;\;\;\;\;\;\big(\langle i,j\rangle\in \Pi\big)$$ is a dense open subset of the function space $([0,1]^\omega)^X$.\footnote{$([0,1]^\omega)^X$ is the set of continuous mappings from $X$ into the Hilbert cube $([0,1]^\omega,\rho)$, endowed with the complete metric $\varrho(f,g)=\sup \{\rho(f(x),g(x)):x\in X\}.$ The Lemma from  \S44 of \cite{kur} is formulated for compact $X$, but one easily sees that compactness is not needed for its proof.}  Now by Theorem 2 of  \cite{kur} \S44 VI, there is a homeomorphic embedding $h:X\hookrightarrow [0,1]^\omega$ such that $\overline{h[A_{\langle i,j\rangle}]}\cap \overline{h[B_{\langle i,j\rangle}]}=\varnothing$ for each $\langle i,j \rangle\in \Pi$. 

The metric compactification $\overline {h[X]}$ is indecomposable. For if $K$ is a proper closed subset of $\overline {h[X]}$ with non-empty interior, then there exists $\langle i,j\rangle\in \Pi$ such that $\overline {h[U_i]}\cap K=\varnothing$ and $h[U_j]\subseteq K$.  Then $h[A_{\langle i,j\rangle}]\cap K\neq\varnothing$ and $h[B_{\langle i,j\rangle}]\cap K\neq\varnothing$. As $\overline {h[X]}=\overline {h[U_i]}\cup \overline {h[A_{\langle i,j\rangle}]}\cup \overline {h[B_{\langle i,j\rangle}]}$, we have $K\subseteq \overline{h[A_{\langle i,j\rangle}]}\cup \overline {h[B_{\langle i,j\rangle}]}$. Finally, $\overline{h[A_{\langle i,j\rangle}]}\cap \overline {h[B_{\langle i,j\rangle}]}=\varnothing$, so $K$ is not connected.\end{proof}

By Theorems \ref{4}  through \ref{9}, we have the following.

\begin{uc}\label{10}If  $X$ is indecomposable and connected, then the following are equivalent:
\begin{enumerate}[label=\textnormal{(\roman*)}]	
\item  $\beta X$ is indecomposable; 
\item $X$ is strongly indecomposable;
\item  $X$ has a metrizable indecomposable compactification;
\item $X$ has an indecomposable compactification;
\item $X$ has an irreducible compactification;
\item $\beta X$ is irreducible.
\end{enumerate}
\end{uc}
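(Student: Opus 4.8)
The plan is to avoid checking the fifteen pairwise equivalences directly and instead close a single cycle of implications
$$\mathrm{(i)}\Rightarrow\mathrm{(ii)}\Rightarrow\mathrm{(iii)}\Rightarrow\mathrm{(iv)}\Rightarrow\mathrm{(v)}\Rightarrow\mathrm{(vi)}\Rightarrow\mathrm{(i)},$$
each arrow being a one-line appeal to a result already in hand. The standing hypotheses that $X$ is connected, indecomposable, separable, and metrizable are precisely what license those appeals.

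Running through the arrows: (i)$\Rightarrow$(ii) is one direction of Theorem~\ref{4}; (ii)$\Rightarrow$(iii) is Theorem~\ref{9}, which is where separability and metrizability of $X$ are spent; and (iii)$\Rightarrow$(iv) is the trivial remark that a metrizable indecomposable compactification is in particular an indecomposable compactification. For (iv)$\Rightarrow$(v) I would take an indecomposable compactification $\gamma X$ and observe that, $X$ being connected and dense, $\gamma X=\cl_{\gamma X}X$ is connected; Theorem~\ref{8} then yields that $\gamma X$ is irreducible, so $X$ has an irreducible compactification. The step (v)$\Rightarrow$(vi) is exactly Theorem~\ref{6}(i). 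Finally, for (vi)$\Rightarrow$(i) I would note that $\beta X$ is itself a compactification of $X$ (again connected, since $X$ is), so the hypothesis that $\beta X$ is irreducible makes it an irreducible compactification of the indecomposable space $X$; Theorem~\ref{6}(iii) then forces $\beta X$ to be indecomposable.

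I do not expect a serious obstacle, since the work has been front-loaded into Theorems~\ref{4}--\ref{9}; the only arrows needing more than a bare citation are (iv)$\Rightarrow$(v) and (vi)$\Rightarrow$(i), where the cited theorems carry a connectedness hypothesis (on $\gamma X$ via Theorem~\ref{8}, and on $X$ via Theorem~\ref{6}(iii)) that must be verified. In both cases connectedness is inherited automatically: any compactification of a connected space is the closure of a connected dense subset and hence connected. The one genuinely substantive move — and the point at which the hypothesis that $X$ is \emph{indecomposable} earns its place in the statement — is the closing implication (vi)$\Rightarrow$(i): recognizing $\beta X$ as an irreducible compactification of $X$ and feeding it back through Theorem~\ref{6}(iii) is what upgrades mere irreducibility of $\beta X$ to full indecomposability. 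No appeal to Corollary~\ref{5} is required.
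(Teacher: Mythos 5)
Your proposal is correct and is essentially the paper's own argument: the paper proves Corollary~\ref{10} by simply chaining Theorems~\ref{4} through \ref{9}, and your explicit cycle (i)$\Rightarrow$(ii)$\Rightarrow$(iii)$\Rightarrow$(iv)$\Rightarrow$(v)$\Rightarrow$(vi)$\Rightarrow$(i) is exactly that chaining, with Theorem~\ref{4} giving (i)$\Rightarrow$(ii), Theorem~\ref{9} giving (ii)$\Rightarrow$(iii), Theorem~\ref{8} giving (iv)$\Rightarrow$(v), and Theorems~\ref{6}(i) and \ref{6}(iii) closing the loop. You also correctly pinpoint where the standing hypotheses (connectedness for Theorems~\ref{6} and \ref{8}, indecomposability of $X$ for the final step (vi)$\Rightarrow$(i)) are actually used.
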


\begin{ur}Questions (A$'$), (B), and (D) are equivalent by  (i)$\Leftrightarrow$(iii)$\Leftrightarrow$(v). Finally, if Question (C) has a positive answer for a particular $W$, as witnessed by some compactification $\gamma W$, then $\gamma W$ obviously has more than one composant. This explains the implication (C)$\implies$(D) in Figure 1.\end{ur}

\section{Examples in Euclidean 3-space} \label{peo}

We have already seen that the compactifying process can destroy indecomposability.  In fact, a single limit point can turn a widely-connected set into a decomposable one. This is the subject of Example 1.

\begin{ue}\label{e1}There is a widely-connected subset of Euclidean $3$-space which fails to be indecomposable upon the addition of one  limit point.\footnote{Mary Ellen Rudin hinted at  such an example in \cite{rud3}, where the following was shown: If $I\subseteq \mathbb R ^2$ is an indecomposable connected set, and $p\in \mathbb R ^2$ is a limit point of $I$, then $I\cup \{p\}$ is also indecomposable.  Our example shows that $\mathbb R ^2$ cannot be replaced with $\mathbb R ^3$.
And, although $q[W]$ embeds into the plane, its one-point augmentation $q[W]\cup\{ \langle 0,0,0\rangle\}$ does not.}\end{ue}

\noindent\textit{Construction.} Figure \ref{fiffe} depicts a rectilinear version of the bucket-handle continuum $K\subseteq [0,1]^2$, together with the diagonal $\Delta:=\{\langle x,x\rangle:x\in[0,1]\}$. The set $K\setminus \Delta$ is the union of two open sets $K_0:=\{\langle x,y\rangle\in K:x<y\}$ and $K_1:=\{\langle x,y\rangle\in K:x>y\}$.
\begin{wrapfigure}[16]{R}{0.45\textwidth}
\centering
  \includegraphics[scale=0.38]{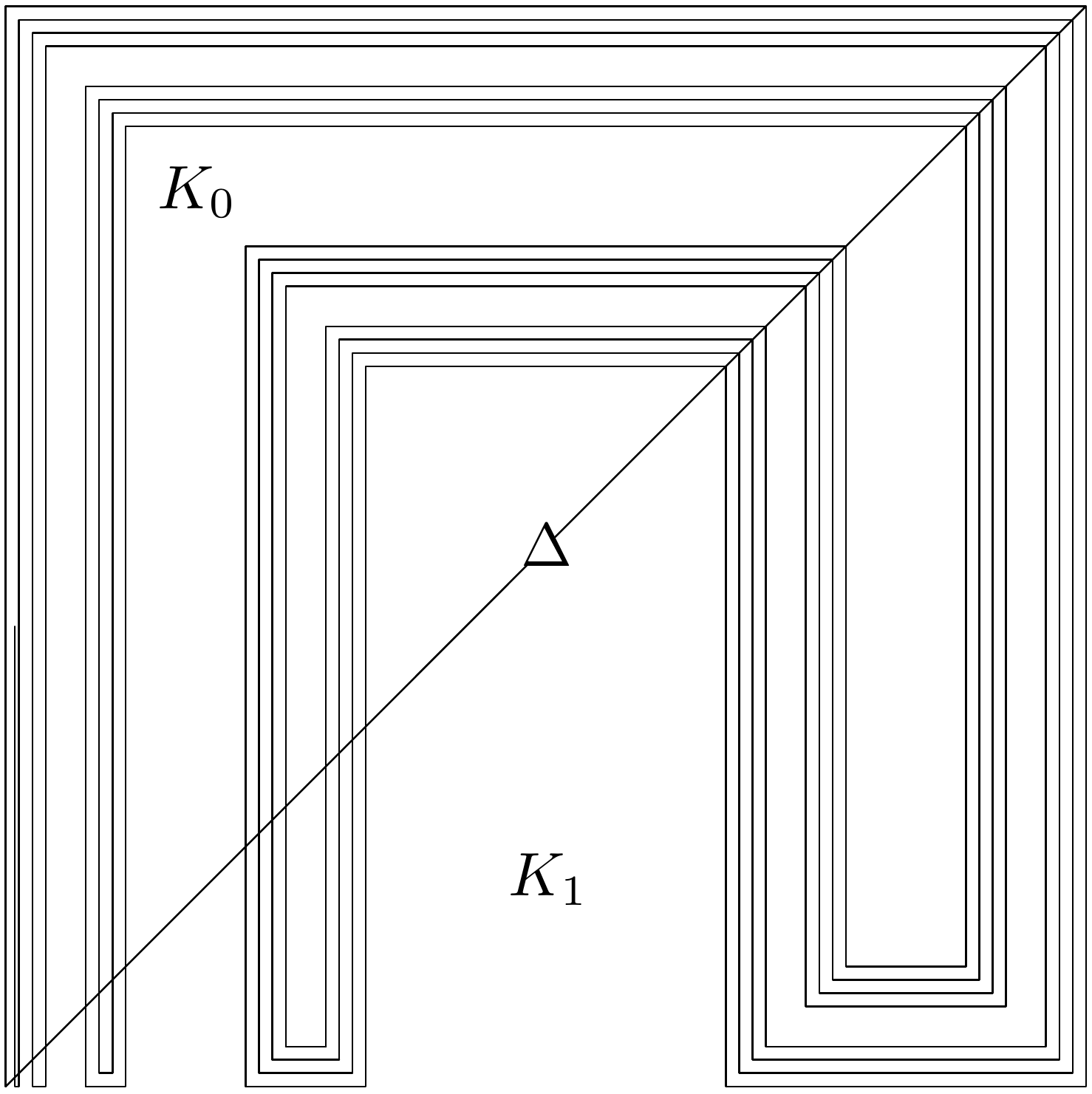} 
  \caption{$K\setminus \Delta=K_0\cup K_1$}
  \label{fiffe}
\end{wrapfigure}

Wojciech D\k{e}bski \cite{deb} described a closed set $A\subsetneq K$ such that $K\setminus A$ is connected and $A$ intersects every composant of $K$. In \cite{lip}, we constructed a  widely-connected  $W\subseteq K$  by deleting a countable infinity of D\k{e}bski sets (copies of $A$) from $K$.   By the particular construction of $W$,  we can assume $A\subseteq K_0\setminus W$. 

Consider $\widehat W:=W\cup A$ to be on the surface of the unit sphere  $\mathbb S^2\subseteq (\mathbb R ^3,d)$.  Let $q:\widehat W\to \mathbb R ^3$ be the  mapping defined by the scalar multiplication $q(x)=d(x,A) \cdot x$, where $d(x,A)=\inf\{d(x,y):y\in A\}$.  So $q$  shrinks $A$ to the single point at the  origin $\langle 0,0,0\rangle$.  Meanwhile, compactness of $A$ implies that $q\restriction W$ is a homeomorphism. In particular, $q[W]$ is widely-connected.

\vspace{2mm}

\noindent\textit{Verification.} By the constructions of $A$  and $W$ (see Section 6 of \cite{lip}), $A$ intersects every quasi-component of $\widehat W\cap K_0$. So $q[\widehat W\cap K_0]$ is connected. The set $q[\widehat W\cap K_0]$ is neither dense nor nowhere dense in $ q[\widehat W]$, so $q[\widehat W]=q[W]\cup \{\langle 0,0,0\rangle\}$ is not indecomposable. 
 
 \begin{ur}Likewise, the one-point compactification of an indecomposable connected space (e.g. $K\setminus A$) may be decomposable.\end{ur}
 
 \
 
 The reader may easily verify that every widely connected but not strongly indecomposable space contains a perfect hereditarily disconnected set that is not strongly indecomposable. Example \ref{e2} exhibits a very concrete set with these properties.
 
\begin{ue} \label{e2}There is a  perfect hereditarily disconnected set which is not strongly indecomposable.
\end{ue}

\begin{wrapfigure}[11]{R}{0.47\textwidth}
\centering
  \includegraphics[scale=.28]{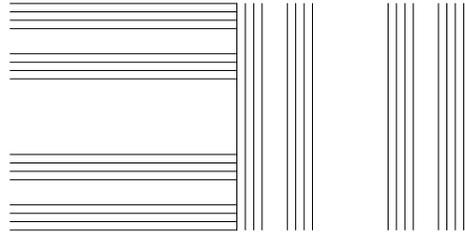} 
   \caption{$Y\subseteq \big([-1,0]\times C\big)\cup \big(C\times [0,1]\big)$}
  \label{fif}
\end{wrapfigure}

\noindent\textit{Construction \& Verification.} Let $C$ be the middle-thirds Cantor set in $[0,1]$.  If $\mathbb Q$ and $\mathbb P$ denote the rationals and irrationals, respectively, then let $X=(C'\times \mathbb Q)\cup (C''\times \mathbb P)$, where $C'$ is the set of endpoints of intervals removed during the construction of $C$, and $C''=C\setminus C'$.  So $X$ is the Knaster-Kuratowski fan without its dispersion point.   Note  that $\overline{X\cap (\{c\}\times \mathbb R)}=\{c\}\times \mathbb R$ for each $c\in C$, and $\{X\cap (\{c\}\times \mathbb R):c\in C\}$ is the set of quasi-components of $X$.

Let $X_1=X\cap (\mathbb R \times [0,1])$, and let $X_0=\{\langle -y,x\rangle:\langle x,y\rangle\in X_1\}$ be the copy of  $X_1$ rotated ninety degrees about the origin. 
The set $Y:=X_0\cup X_1$ is certainly perfect and hereditarily disconnected. The two properties of $X$ noted above imply that $X_0$  is contained in a quasi-component of $Y\setminus((\sfrac{1}{2},1]\times [0,1])$, therefore  $Y$ is not strongly indecomposable.


\

Lemma 9.8 in \cite{KH} says that an open $U\subseteq \beta X$ is connected if and only if $U\cap X$ is connected (this was actually  demonstrated in the proof of Theorem \ref{4}). Thus, if $X$ is an indecomposable connected set then $\beta X$ is a continuum each of whose connected open subsets is dense.   

\begin{ue}\label{e3}There is a decomposable continuum each of whose connected open subsets is dense.   \end{ue}

\begin{wrapfigure}[18]{R}{0.4\textwidth}
\centering
 \includegraphics[scale=.37]{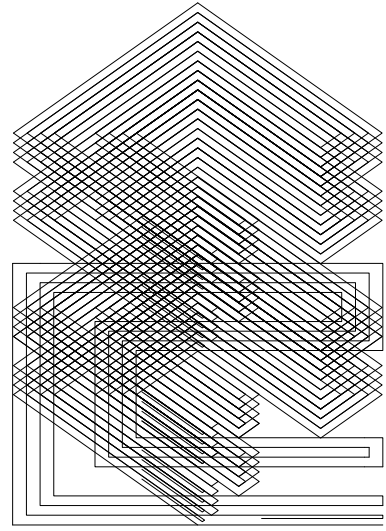}
   \caption{$D\subseteq \mathbb R ^3$}
  \label{fiff}
\end{wrapfigure}

\noindent\textit{Construction. }Let $K$ and $C$ be as in Examples \ref{e1} and \ref{e2}, respectively. Let $D$ be the quotient of the disjoint union $$[\{0\}\times K]\cup [\{1\}\times C\times K]$$ obtained by identifying  $\langle 0,\langle c,\sfrac{1}{2}\rangle \rangle$ with $\langle 1,c,\langle0,0\rangle\rangle$ for each $c\in C$. An embedding of $D$  is depicted in Figure \ref{fiff}. 


\vspace{2mm}

\noindent\textit{Verification.} Since $\{0\}\times K$ is a proper subcontinuum of $D$ with non-void interior,  $D$ is decomposable. 

Now suppose $G$ is a non-void open subset of $D$ such that $\overline G\neq D$. Then $D\setminus \overline G$ is non-empty and open, so at least one of the following open sets is also non-empty.
\begin{align*}
V_0:&=[\{0\}\times (K\setminus (C\times \{\sfrac{1}{2}\}))]\setminus \overline G\text{; and} \\
V_1:&=[\{1\}\times C \times (K\setminus \langle 0,0\rangle)]\setminus \overline G.
\end{align*}
We show $G$ is not connected in each of the two cases $V_0\neq\varnothing$ and $V_1\neq\varnothing$. Throughout, we assume $G\cap  (\{0\}\times K)\neq\varnothing$ (otherwise it is clear that $G$ is not connected). 

\begin{uca1} $V_0\neq\varnothing$. \end{uca1}

By strong indecomposability of $\{0\}\times K$ there are disjoint closed sets $A$ and $B$ such that $(\{0\}\times K)\setminus V_0=A\cup B$, $A\cap G\neq\varnothing$, and $B\cap G\neq\varnothing$.  Let $A'=A\cup [\{1\}\times \{c\in C:\langle c,\sfrac{1}{2}\rangle\in A\}\times K]$ (with appropriate points identified), and define $B'$ similarly.  Then  $\{A',B'\}$ is a clopen partition of $D\setminus V_0\supseteq G$.   Each partition set  intersects $G$, so $G$ is not connected.

\begin{uca2}$V_1\neq\varnothing$. \end{uca2}

We can  assume $G$ contains a dense subset of $\{0\}\times(C\times \{\sfrac{1}{2}\})$.  Otherwise 
we could re-define $V_0$ to be a non-void relatively open subset of $[\{0\}\times (C\times \{\sfrac{1}{2}\})]\setminus G$, and use Case 1 methods to get a separation of $G$.   For although  $V_0$ (re-defined) is no longer open in $\{0\}\times K$, the separated sets $A$ and $B$ can be constructed as follows.  There is an arc $\alpha\subseteq \{0\}\times K$ such that $\text{ends}(\alpha)\subseteq V_0$ and $\alpha\cap G\neq\varnothing$.   There is then an epsilon-wide tube $\tau\subseteq \{0\}\times K$ (meaning a very thin $\tau\simeq C\times [0,1]$) such that $\alpha\subseteq \tau$, $\partial \tau\subseteq V_0$, and $G\cap(\{0\}\times K)\setminus \tau\neq\varnothing$. Put $A=\tau\setminus V_0$ and $B=(\{0\}\times K)\setminus (\tau\cup V_0)$.  Enlarging $A$ and $B$ to  $A'$ and $B'$ will again show $G$ is not connected.  

Let $S\times T$  be a non-empty  $C\times K$-open set such that $\{1\}\times S\times T\subseteq V_1$. 

Since $G$ is open and contains a dense subset of $\{0\}\times(C\times \{\sfrac{1}{2}\})$, there is a $K$-open set $U$  and a non-empty  $C$-clopen $E\subseteq S$ such that  $\langle 0,0\rangle\in U$ and $\{1\}\times E\times U\subseteq G$. By strong indecomposability of $K$, there is a clopen partition $\{A,B\}$ of $K\setminus T$ such that $A\cap U\neq\varnothing$ and $B\cap U\neq\varnothing$. Without loss of generality,  $\langle 0,0\rangle\in A$. Then $G$ is contained in the two disjoint closed sets $\{1\}\times E\times B$ and  (the quotient of) $$\big(\{0\}\times K\big)\cup \big(\{1\}\times E\times A\big)\cup \big(\{1\}\times (C\setminus E)\times K\big).$$ Each  set intersects $G$, so $G$ is not connected. This concludes Example \ref{e3}.
  
\setstretch{1.1}

\

We end with a counterexample to Question (C), as promised in Section 1.2.  

\begin{ue}\label{e4}There is a widely-connected set about which every compactification is reducible.\end{ue}

\noindent\textit{Construction Phase I.} 
Let $K$, $\Delta$, $K_0$, and $K_1$ be as defined in Example \ref{e1}. Let $X$ be any widely-connected subset of $K$ with more than one point (Example \ref{e1} or  \cite{swi,lip}). For each $i<2$  (that is, $i\in \{0,1\}$) put $X_i '=(X\cap K_i)\cup \Delta '$, where  $\Delta '$ is the diagonal Cantor set $\Delta\cap K$.  Note that each $X'_i$ is hereditarily disconnected, but $X':=X'_0\cup X'_1=X\cup \Delta '$ is connected since $X$ is connected and (necessarily) dense in $K$.  
 
\begin{ucl1}\label{lemmi}If $A_0$ and $A_1$ are relatively clopen subsets of $X'_0$ and $X'_1$, respectively, and $A_0\cap \Delta=A_1\cap \Delta$, then $A_0\cup A_1$ is clopen in the topology of $X'$.\end{ucl1}

\begin{proof}Clearly each $X_i '$ is closed in $X'$ and each $X'_i\setminus \Delta$ is open in $X'$.  So $A_0\cup A_1$ is closed in $X'$ and $(A_0\cup A_1) \setminus \Delta$ is open in $X'$.  It remains to show that  $A_0\cup A_1$ is an $X'$-neighborhood of $(A_0\cup A_1) \cap \Delta$. Well, let  $U_0$ and $U_1$ be $X'$-open sets such that $U_i\cap X'_i =A_i$. The  hypothesis $A_0\cap \Delta=A_1\cap \Delta$  implies  $$(A_0\cup A_1) \cap \Delta\subseteq A_0\cap A_1\subseteq  U_0\cap U_1=(U_0\cap U_1)\cap (X'_0\cup X'_1)\subseteq (U_0\cap X'_0)\cup(U_1\cap X'_1)=A_0\cup A_1;$$ so $U_0\cap U_1$ witnesses that $(A_0\cup A_1) \cap \Delta$ is contained in the $X'$-interior of $A_0\cup A_1$.\end{proof}

\begin{ucl2}\label{pop}For each $i<2$,   $\{0\}\times X'_i$ is a quasi-component of the set $$Y_i:=(\{0\}\times X'_i)\cup(\{1/n:n=1,2,3,...\}\times X'_{1-i})$$ in the subspace topology inherited from $[0,1] \times X'$. \end{ucl2}

\begin{proof}Fix $i<2$.  Clearly $\{0\}\times X'_i$ contains a quasi-component of $Y_i$. We need to prove that  $\{0\}\times X'_i$ is contained in a quasi-component of $Y_i$. To that end, let $A$ be a clopen subset of $Y_i$ such that $A\cap (\{0\}\times X'_i)\neq\varnothing$. We show $\{0\}\times X'_i\subseteq A$. 

For each $a\in A\cap (\{0\}\times \Delta)$ there is an integer $n(a)>0$ and an $X'$-open set $U(a)$ such that $a\in ([0,1/n(a)]\times U(a))\cap Y_i\subseteq A$. By compactness of $A\cap (\{0\}\times \Delta)$ there is a finite $\dot A\subseteq A$ such that $$A\cap (\{0\}\times \Delta)\subseteq \bigcup\big\{[0,1/n(a)]\times U(a):a\in \dot A\big\}.$$ Similarly, there is a finite $\dot B\subseteq Y_i\setminus A$, a set of positive integers $\{n(b):b\in \dot B\}$, and a collection of $X '$-open sets $\{U(b) :b\in \dot B\}$ such that $$(Y_i\setminus A)\cap (\{0\}\times \Delta)\subseteq Y_i\cap \bigcup \big\{[0,1/n(b)]\times U(b):b\in \dot B\big\}\subseteq Y_i\setminus A.$$

Put $m=\max\{n(y):y\in 	\dot A \cup \dot B\}$, and let 
\begin{align*}A_0&=A\cap (\{0\}\times X'_i)\text{; and}\\
A_1&=\{0\}\times \{x\in X'_{1-i}:\langle 1/m,x\rangle\in A\}.
\end{align*}
Then $A_0\cap (\{0\}\times \Delta)=A_1\cap (\{0\}\times \Delta)$. So $A_0\cup A_1$ is clopen in $\{0\}\times X'$ by Claim 1.  Since $\{0\}\times X'$  is connected and $A_0\neq\varnothing$, it follows that $A_0\cup A_1=\{0\}\times X'$. Thus $\{0\}\times X'_i\subseteq A_0\subseteq A$. \end{proof}

\begin{ur}Like $Y$ in Example \ref{e2}, $Y_i$ is perfect  and hereditarily disconnected but not strongly indecomposable. 
\end{ur}

\vspace{2mm}

\noindent\textit{Construction Phase II.} Roughly speaking, we will use a dense homeomorphism orbit in $2^\mathbb Z$ to form a linked chain of $X'_0$'s and $X'_1$'s.

Define $e\in 2^\mathbb Z$  by concatenating all finite binary sequences in the positive and negative  directions of $\mathbb Z$. That is, if $\{b_i:i<\omega\}$ enumerates $2^{<\omega}$, then put $e\restriction \mathbb Z ^+ =b_0 ^{\;\frown} b_1 ^{\;\frown} b_2 ^{\;\frown} ...$ and   $e(n)=e(-n)$ for $n\in \mathbb Z ^-$. Here $\mathbb Z^-=\{...,-2,-1,0\}$ and $\mathbb Z^+=\{0,1,2,...\}$.  

Let $\eta:2^\mathbb Z \to 2^\mathbb Z$ be the shift map $\eta(f)(n)=f(n+1)$. Clearly $\eta$ is a homeomorphism, and the backward and forward orbits 
$$E_0=\{\eta ^n(e): n\in \mathbb Z ^-\} \text{ \;and\; }  E_1=\{\eta ^n(e): n\in \mathbb Z ^+\setminus \{0\}\}$$
 are each dense in $2^\mathbb Z$. Also,  $E_0\cap E_1=\varnothing$. For if $n,m<\omega$ such that $\eta^{-n}(e)=\eta^m(e)$, then $\eta^{n+m}(e)=e$. Density of $E_1$ implies that $e$ is not a periodic point of $\eta$. So $n+m=0$, whence $n=m=0$.  We have $\eta^{-n}(e)=\eta^m(e)=e\in E_0\setminus E_1$. 

Let $p,q\in \Delta '$ be such that  $K$ is irreducible between $p$ and $q$ (take  $p=\langle 1,1\rangle$ and $q=\langle \sfrac{1}{4},\sfrac{1}{4}\rangle$, for example). Let $\sim\;\subseteq (2^\mathbb Z \times K)^2$ be the relation  $\{\langle \langle f,p\rangle,  \langle \eta(f),q\rangle\rangle:f\in 2^\mathbb Z \}$.  Define $$W=(E_0\times X'_0)\cup (E_1\times X'_1)\text{\; and\; }\widetilde{W}=W/\sim.$$
Note that $W$ is hereditarily disconnected since $E_0\cap E_1=\varnothing$ and each $X_i'$ is hereditarily disconnected.

 \begin{ucl3}$\widetilde{W}$ is connected.\end{ucl3}	
 \begin{proof}For each $i<2$ and $e'\in E_i$ we see that $\{e'\}\times X'_i$ is a quasi-component of $W$. This follows from Claim   2  since there is a sequence of points $(e'_n)\in (E_{1-i})^\omega$ such that $e'_n\to e'$ as $n\to\infty$. So the relation $\sim$ ties together the quasi-components of $W$ and the claim holds. 
\end{proof}

\begin{ucl4}$\widetilde{W}$ is \textit{widely}-connected.\end{ucl4}
\begin{proof}Let  $C$ be a non-empty connected subset of $\widetilde W$ such that $\overline C\neq \widetilde W$.  We show $\abs{C}=1$.  

Let $x\in C$, and let $U\times V$ be a non-empty open subset of $2^\mathbb Z\times K$ such that $\{p,q\}\cap V=C\cap (U\times V) =\varnothing$. There exists $x'\in W$ such that $x=x'/\sim$. Let $e'$ be the first coordinate of $x'$. Because $E_0$ and $E_1$ are dense in $2^\mathbb Z$,  there are integers $n,m>0$ such that $\{\eta ^{-n}(e'),\eta^m(e')\}\subseteq U$. Since $\eta ^{n+m}$ is a homeomorphism and $2^\mathbb Z$ has a basis of clopen sets, there is a clopen $A\subseteq 2^\mathbb Z$ such that $\eta ^{-n}(e')\in A\subseteq U$ and $\eta^{n+m}[A]\subseteq U$. 

Irreducibility of $K$ between $p$ and $q$ implies that there are  disjoint compact sets $L$ and $M$ such  that  $p\in L$,  $q\in M$, and $K\setminus V=L\cup M$. Observe that $$T:= \Big (\big[W\setminus (U\times V)\big]\cap \big[(A\times L)\cup (\eta ^{n+m}[A]\times M)\hspace{1.5mm}\cup \hspace{-1.5mm} \bigcup _{0<i<n+m\hspace{-1.5mm}} \hspace{-1mm}(\eta ^i [A]\times K)\big]\Big )\big/\sim$$ is a relatively clopen subset of $\widetilde W\setminus (U\times V)$. And  $x\in T$ since $e'\in \eta^n[A]$ and $0<n<n+m$. Thus $C\subseteq T$.

Further, let $R=T\cap \big(\big\{\eta^i(e'):-n\leq i\leq m\big\}\times K\big)/\sim$ and behold:  (i) $x\in R$, and (ii) if $y\in T\setminus R$ then by the construction of $T$ there is a relatively clopen $S\subseteq T$ such that $R\subseteq S\subseteq T\setminus \{y\}$.  Thus $C\subseteq R$.  But $R$ is hereditarily disconnected (see Figure \ref{366}), so  $C=\{x\}$.\end{proof}

\begin{ucl5}\label{imop} Every compactification of $\widetilde {W}$  is reducible between every two points of $\widetilde {W}$.

\end{ucl5}
\begin{proof} Let $\gamma \widetilde W$ be a compactification of $\widetilde W$, and let $\langle x,y\rangle \in \widetilde {W} ^{\raisebox{3pt}{\scriptsize \hspace{.3mm}2}}$\hspace{-.5mm}.

There are two points $x',y'\in W$ such that $x=x'/\sim$ and $y=y'/\sim$. Let $n,m\in\mathbb Z$ such that $\eta ^n(e)$ and $\eta ^m(e)$ are the first coordinates of $x'$ and $y'$, respectively. Assume  $n\leq m$.  There is a non-empty clopen $A\subseteq 2^\mathbb Z$ such that $A\cap \{\eta ^i(e):n-1\leq i\leq m+1\}=\varnothing$.   Set $T= (A\times K)/\sim$.

For $i\in\mathbb Z$, put $\delta(i)=0$ if $i\leq0$ and $\delta(i)=1$ if $i> 0$. Then $$Q:=\Big(\bigcup _{n\leq i\leq m} \big\{\eta ^i(e)\big\}\times W_{\delta(i)}\Big)\big /\sim$$  is a well-defined  subset of $\widetilde W\setminus T$ containing $x$ and $y$. Each fiber $\{\eta ^i(e)\}\times W_{\delta(i)}$, $n\leq i\leq m$, is a quasi-component of $W\setminus (A\times K)$ by density of $E_{1-\delta(i)}$ in $2^{\mathbb Z}\setminus A$ (Claim 2).  These fibers are tied together by $\sim$, so that  $Q$ is contained in a quasi-component of $\widetilde W \setminus T$.  

Let $T'$ be an  open subset of $\gamma \widetilde W$ such that $T'\cap \widetilde W=T$. Let $Q'$ be the quasi-component of $x$ in $\gamma \widetilde W \setminus T'$.  Evidently $\{x,y\}\subseteq Q\subseteq Q'$.  Further,  $Q'$ is a (proper) subcontinuum of $\gamma \widetilde W$ by Lemma \ref{1}, so $\gamma \widetilde W$ is reducible between $x$ and $y$. \end{proof}
\begin{figure}[h]
\centering
  \includegraphics[scale=0.6]{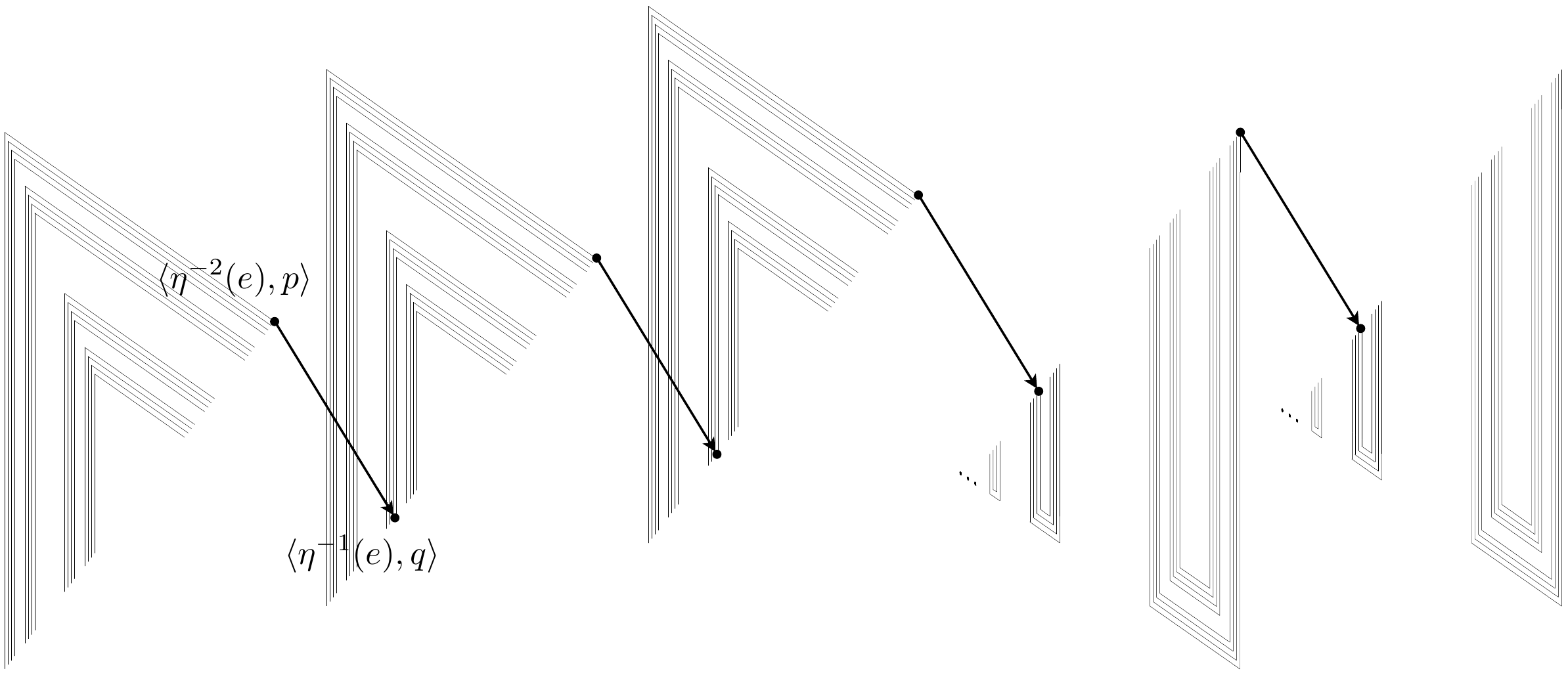} 
  \caption{Superset of $R$ if $n=m=2$ and $e'=e$}
  \label{366}
\end{figure}

\begin{urs}\

\noindent\begin{enumerate}

\item[(a)] The entire quotient $(2^\mathbb Z\times K)/\sim$ is Hausdorff ($\sim$ is closed), and is therefore metrizable.  Together with $\dim((2^\mathbb Z\times K)/\sim)=1$, this implies via the Menger-N\"{o}beling Theorem (\S45 VII Theorem 1 in \cite{kur}) 
 that $(2^\mathbb Z\times K)/\sim\;\supseteq \widetilde W$ embeds into Euclidean $3$-space.\footnote{
For a cube embedding of a space similar to $\widetilde W$,  see \cite{lip2}. } 
\item[(b)]  From Claim 5 it follows that for every compactification $\gamma \widetilde W$  there is a composant $P$ of $\gamma \widetilde W$ such that $\widetilde W \subseteq P$.   So the answer to Question (C) is \textit{no}. However, $\widetilde W$ is not a counterexample to Bellamy's conjecture.  It is a dense subset of   the metrizable continuum $(2^{\mathbb Z}\times K) /\sim$ which, by the arguments used to prove  Claim 4, is indecomposable.   \end{enumerate}
\end{urs}

\section{Related questions}

Towards a potential negative answer to Question (A), we would like to know:

\begin{uq}[E]Is there a widely-connected T$_1$ or T$_2$ topological space  which is not strongly indecomposable?\end{uq}

There are also several interesting variations of Questions (B)  and (D). For instance, let $X$ be a separable metrizable connected space. Does  $X$ densely embed into an indecomposable/irreducible continuum if $X$ is (i) locally compact indecomposable? (ii) an indecomposable plane set? (iii) an indecomposable one-to-one image of $[0,\infty)$ or $(-\infty,\infty)$?  Some tangential  results for sets of type (iii) are given by the author in \cite{lip3}, but these problems are currently unsolved.

\small

\end{document}